 \newtheorem{thm}{Theorem}[section]
\newtheorem*{Mth}{Theorem}
 \newtheorem{lem}[thm]{Lemma}
 \newtheorem{prop}[thm]{Proposition}
 \theoremstyle{definition}
 \newtheorem{defn}[thm]{Definition}
 \theoremstyle{remark}
 \newtheorem{ex}{Example}
\numberwithin{equation}{section} \numberwithin{figure}{section}
\newcommand{\CC}{{\mathbb C}}
\newcommand{\DD}{{\mathbb D}}
\newcommand{\EE}{\mathbb{E}}
\newcommand{\TT}{{\mathbb T}}
\newcommand{\RR}{{\mathbb R}}
\newcommand{\NN}{{\mathbb N}}
\newcommand{\ZC}{\mathcal{Z}}
\newcommand{\Dal}{\mathfrak{D}_{\alpha}}
\newcommand{\alphacap}{\text{cap}_\alpha}
\newcommand{\McC}{\raise.5ex\hbox{c}}
\newcommand{\ip}[2]{\langle #1, #2 \rangle}
\DeclareMathOperator{\dist}{dist}
\newcommand{\al}{\alpha}
\renewcommand{\phi}{\varphi}
\begin{document}
\bibliographystyle{alpha}

\title[Cyclic polynomials in two variables]{Cyclic polynomials in two variables}
\author[B\'en\'eteau]{Catherine B\'en\'eteau}
\address{Department of Mathematics, University of South Florida, 4202 E. Fowler Avenue, Tampa, FL 33620-5700, USA.}
\email{cbenetea@usf.edu}
\author[Knese]{Greg Knese}
\address{Washington University in St. Louis, Department of Mathematics, One Brookings Drive, Campus Box 1146, St. Louis, MO 63130-4899, USA.}
\email{geknese@math.wustl.edu}
\author[Kosi\'nski]{\L ukasz Kosi\'nski}

\address{Institute of Mathematics, Jagiellonian University, \L
  ojasiewicza 6, 30-348 Krak\'ow, Poland.}
\email{lukasz.kosinski@gazeta.pl} \author[Liaw]{Constanze Liaw}
\address{Department of Mathematics, Baylor University, One Bear Place
  \#97328, Waco, TX 76798-7328, USA.}
\email{Constanze$\underline{\,\,\,}$Liaw@baylor.edu}
\author[Seco]{Daniel Seco} \address{Mathematics Institute, Zeeman
  Building, University of Warwick, Coventry CV4 7AL, UK.}
\email{D.Seco@warwick.ac.uk} \author[Sola]{Alan Sola} \address{Centre
  for Mathematical Sciences, University of Cambridge, Wilberforce
  Road, Cambridge CB3 0WB, UK.}  \email{a.sola@statslab.cam.ac.uk}
\thanks{Knese is supported by NSF grant DMS-1363239. Kosi\'nski is
  supported by NCN grant 2011/03/B/ST1/04758. Liaw is partially
  supported by the NSF grant DMS-1261687. Seco is supported by ERC
  Grant 2011-ADG-20110209 from EU programme FP2007-2013 and MEC/MICINN
  Project MTM2011-24606.  Sola acknowledges support from the EPSRC
  under grant EP/103372X/1. } \date{\today}

\keywords{Cyclicity, Dirichlet-type spaces, bidisk, stable
  polynomials, determinantal representations.}
\subjclass[2010]{Primary: 32A37, 47A13. Secondary: 14M99.}
\begin{abstract}
  We give a complete characterization of polynomials in two complex
  variables that are cyclic with respect to the coordinate shifts
  acting on Dirichlet-type spaces in the bidisk, which include the
  Hardy space and the Dirichlet space of the bidisk.  The cyclicity of
  a polynomial depends on both the size and nature of the zero set of
  the polynomial on the distinguished boundary.  The techniques in the
  proof come from real analytic function theory, determinantal
  representations for stable polynomials, and harmonic analysis on
  curves.
\end{abstract}

\maketitle
\tableofcontents

\section{Introduction}

In groundbreaking and classical work, V.I. Smirnov \cite{Smirnov} and
independently A. Beurling \cite{Beurling} characterized the cyclic vectors for
the shift operator, multiplication by $z$, in the Hardy space of the
unit disk $\DD \subset \CC$: a function $f\in H^2(\mathbb{D})$ is
cyclic precisely if it is \emph{outer}, meaning
\[
\log |f(0)| = \int_0^{2\pi} \log |f(e^{i\theta})| \frac{d\theta}{2\pi}
\text{ and } f(0) \ne 0.
\]
We recall that a vector is cyclic for an operator (or operators) if
the invariant subspace generated by the vector is dense.  In this
paper, we generalize in two directions---we move to functions on the
bidisk
\[\mathbb{D}^2=\{(z_1,z_2)\in \mathbb{C}^2\colon |z_1|<1, |z_2|<1\}\]
and we study a whole scale of Hilbert spaces---Bergman, Hardy, and
Dirichlet spaces on the bidisk among them.  At the same time, we also
confine ourselves to characterizing cyclic \emph{polynomials} for
multiplication by the coordinates $z_1,z_2$.

There is an intriguing motivation for generalizing to polydisks.  As
the title of \cite{Nikolskii} suggests, characterizing cyclic vectors
of the Hardy space on the infinite polydisk is ``in the shadow of the
Riemann hypothesis,'' because of the connection between Nyman's
dilation completeness problem (equivalent to the RH) and cyclicity on the
Hardy space of the infinite polydisk.  Characterizing the cyclic
polynomials in the Hardy space of the polydisk $H^2(\DD^n)$ is
straightforward.  Neuwirth, Ginsberg, and Newman \cite{NGN} proved
that a polynomial $f \in \CC[z_1,\dots, z_n]$ is cyclic for
multiplication by the coordinates $z_1,z_2,\dots, z_n$ exactly when
$f$ has no zeros in $\DD^n$; see also \cite{Gel95}.  However, not much
is known with regard to the cyclicity problem for spaces of functions
on the polydisk beyond the Hardy space even when we restrict to the
problem for polynomials.  Here we address this problem on the bidisk
for so-called Dirichlet-type spaces.  We will see that the answer both
depends on the size and nature of the zero sets on the
\emph{distinguished boundary}
\[
\TT^2:= \{(z_1,z_2) \in \CC^2: |z_1|=|z_2|=1\}
\]
of the bidisk.

Let $\alpha\in \RR$ be fixed. A holomorphic function $f\colon \mathbb{D}^2\to \mathbb{C}$
belongs to the {\it Dirichlet-type space} $\mathfrak{D}_{\alpha}$ if the
coefficients $\{a_{k,l}\}_{k,l=0}^{\infty}$ in its Taylor series expansion
\[f(z_1,z_2)=\sum_{k=0}^{\infty}\sum_{l=0}^{\infty}a_{k,l}z_1^kz_2^l\]
satisfy
\begin{equation}
\|f\|^2_{\alpha}=
\sum_{k=0}^{\infty}\sum_{l=0}^{\infty}(k+1)^{\alpha}(l+1)^{\alpha}|a_{k,l}|^2<\infty.
\label{JRnorm}
\end{equation}
It follows directly from this definition that polynomials in two variables form a dense subset of
$\mathfrak{D}_{\alpha}$ for each $\alpha$.

The spaces $\mathfrak{D}_{\alpha}$ generalize
the one-variable Dirichlet-type spaces $D_{\alpha}$, $\alpha\in \RR$,
that consist of functions $f(z)=\sum_{k=0}^{\infty}a_kz^k$ that are
analytic in $\DD$ and satisfy
\[\|f\|^2_{D_{\alpha}}=\sum_{k=0}^{\infty}(k+1)^{\alpha}|a_k|^2 <
\infty.\] These spaces of functions in the unit disk are discussed in
the textbook \cite{EKMRBook}.  Returning to the two-dimensional
setting and putting $\alpha=0$, we recover the {\it Hardy space} of
the bidisk (see \cite{RudBook}). The choice $\alpha=-1$ leads to the
{\it Bergman space} of square area-integrable functions on the bidisk,
while $\alpha=1$ produces the {\it Dirichlet space} of the bidisk.  A
partial list of works addressing $\mathfrak{D}_{\alpha}$ for other
values of $\alpha$ includes \cite{Hed88, Kap94, JR06, BCLSS13II}. As
is pointed out in these references, for $\alpha <2$, these spaces have
the equivalent norm
\begin{equation*}
\begin{aligned}
\|f\|^2_{\alpha,\ast}=&|f(0,0)|^2\\
&+ \int_{\DD}
|\partial_{z_1}f(z_1,0)|^2 w(z_1)^{1-\al} dA(z_1) + \int_{\DD}
|\partial_{z_2}f(0,z_2)|^2 w(z_2)^{1-\al} dA(z_2) \\
&+ \int_{\DD^2} 
|\partial_{z_1 z_2} f(z_1,z_2)|^2 w(z_1)^{1-\al} w(z_2)^{1-\al}
dA(z_1)dA(z_2)
\end{aligned}
\end{equation*}
where $w(z) := (1-|z|^2)$ and $dA$ is area measure on $\DD$.

Evaluation at a point of $\mathbb{D}^2$ is a bounded linear functional
on the Dirichlet-type spaces, and hence the $\mathfrak{D}_{\alpha}$
are {\it reproducing kernel Hilbert spaces} for all $\alpha$. When
$\alpha>1$, the spaces $\mathfrak{D}_{\alpha}$ are in fact {\it
  algebras} contained as subsets of the {\it bidisk algebra}
$A^2(\mathbb{D}^2)$ (see \cite{RudBook}).

\subsection{Cyclic vectors}
Inspecting the definition of the norm in \eqref{JRnorm}, we see that the {\it shift operators}
$S_1$ and $S_2$ defined via
\begin{equation*}
S_1f(z_1,z_2)=z_1f(z_1,z_2) \quad \textrm{and}\quad S_2f(z_1,z_2)=z_2f(z_1,z_2)
\end{equation*}
act boundedly on $\mathfrak{D}_{\alpha}$ as commuting linear operators. 
Coordinate shifts acting on Dirichlet-type spaces represent a concrete and 
natural model of multivariable operator theory on Hilbert space, and the 
{\it lattice of invariant subspaces} of the shift operators acting on 
$\mathfrak{D}_{\alpha}$ exhibits a rich structure that is to a large extent 
still not well understood.

In this paper we are primarily interested in 
invariant subspaces generated by a given function $f\in
\mathfrak{D}_{\alpha}$. 
These subspaces are of the form 
\begin{equation*}
[f]=\overline{\textrm{span}}\{z_1^kz_2^lf\colon k=0,1, \ldots; l=0, 1, \ldots\}
\end{equation*}
where $\overline{\text{span}}$ denotes the closed span of the given
set.  The set $[f]$ is, by definition, the smallest closed subspace
that contains $f$ and is invariant under the action of the shift
operators $S_1$ and $S_2$. A function $f\in \mathfrak{D}_{\alpha}$ for
which $[f]$ coincides with $\mathfrak{D}_{\alpha}$ is called {\it
  cyclic}. In this paper, as in \cite{BCLSS13II}, we address the {\it
  cyclicity problem} for $\mathfrak{D}_{\alpha}$---the problem of
determining the $f\in \mathfrak{D}_{\alpha}$ for which
$[f]=\mathfrak{D}_{\alpha}$.

Cyclicity of the constant functions $f=c$, $c\in \mathbb{C}
\setminus \{0\}$,
follows from the fact that polynomials are dense in $\mathfrak{D}_{\alpha}$. As
point evaluations are bounded linear functionals, cyclicity of $f$ is then
equivalent to the existence of a sequence of polynomials $(p_n)$ such that
\[p_n(z_1,z_2)f(z_1,z_2)-1\longrightarrow 0, \quad (z_1,z_2)\in \mathbb{D}^2,\]
and
\[\|p_nf-1\|_{\alpha}\le C.\]
In view of this, it is clear that functions that have zeros inside the bidisk
cannot be cyclic.  When $\alpha>1$, the spaces $\mathfrak{D}_{\alpha}$ are
 algebras, and functions are cyclic if and only if they are
non-vanishing on $\overline{\mathbb{D}^2}$, the
closure of the bidisk,
but when $\alpha\leq 1$ there exist cyclic functions that vanish on $\partial \mathbb{D}^2$.

In contrast to the Smirnov-Beurling result on $H^2(\DD)$, 
the direct two variable analogue of outer function fails to
characterize cyclicity.

Rudin discovered that any cyclic function in $H^2(\DD^2)$ must
be outer, but there exist non-cyclic outer functions in the
bidisk \cite{RudBook}. 
As mentioned in the introduction, a polynomial is cyclic in
$H^2(\DD^2)$, and hence in $\Dal$ for $\al \leq 0$, if and only if it
has no zeros in the bidisk \cite{NGN}, \cite{Gel95}.  On the other
hand, in \cite{BCLSS13II} the authors exhibited polynomials without
zeros in $\mathbb{D}^2$ that are {\it not} cyclic in
$\mathfrak{D}_{\alpha}$ when $\alpha>1/2$.  Non-cyclicity of certain
polynomials has also been observed by Stefan Richter and Carl Sundberg
\cite{RSslides} in the context of the Drury-Arveson space in the unit
ball of $\mathbb{C}^n$ when $n\geq 4$.

In \cite{BCLSS13II}, certain classes of cyclic functions were
identified using previous work on Dirichlet spaces in the unit disk in
\cite{BCLSS13}.  Indeed, three examples which actually illustrate our
main theorem were discussed.  The polynomials depending on one
variable $z_1-1$ or $z_2-1$ are cyclic exactly when $\al \leq 1$
(exactly as in one variable).  The polynomial $2-z_1-z_2$ which
vanishes at a single point $(1,1)$ of $\TT^2$ is cyclic exactly when
$\al \leq 1$.  Finally, the polynomial $1-z_1z_2$, which does not
depend on one variable and vanishes on a curve in $\TT^2$, is cyclic
exactly when $\al \leq 1/2$.  The problem of characterizing the cyclic
polynomials for each $\mathfrak{D}_{\alpha}$, $\alpha \in (0,1]$, was
stated as an open problem, see \cite[Problem 5.2]{BCLSS13II}.

\subsection{Main result and plan of the paper}
In the present paper, we solve this problem and provide a complete
characterization of the cyclic polynomials in Dirichlet-type spaces of the bidisk. 
Note that since polynomials are multipliers, a polynomial is cyclic if
and only if each of its irreducible factors is cyclic; see Section
\ref{funcspaceprops}.  Therefore, we may state the main theorem as
follows. Below $\ZC(f) = \{z \in \CC^2: f(z) = 0\}$.  

\begin{Mth}[Cyclicity of polynomials in the bidisk]
Let $f(z_1,z_2)$ be an irreducible polynomial with no zeros in the bidisk.
\begin{enumerate}
\item If $\alpha \leq 1/2,$ then $f$ is cyclic in $\mathfrak{D}_{\alpha}$.
\item If $ 1/2 < \alpha \leq 1,$ then $f$ is cyclic in $\mathfrak{D}_{\alpha}$ if and only if $\mathcal{Z}(f)\cap \mathbb{T}^2$ is an empty or finite set, or $f$ 
is a constant multiple of $\zeta - z_1$ or of $\zeta - z_2$ for some $\zeta \in \TT$.
\item If $ \alpha > 1,$ then $f$ is cyclic in $\mathfrak{D}_{\alpha}$ if and only if $\mathcal{Z}(f)\cap \mathbb{T}^2$ is empty.
\end{enumerate}
\end{Mth}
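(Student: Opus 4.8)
The plan is to split into the three regimes and, within each, to separate the \emph{positive} direction (exhibiting polynomials $p_n$ with $p_nf\to 1$ in $\Dal$, which gives cyclicity since point evaluations are bounded) from the \emph{negative} direction (obstructing cyclicity). Three soft preliminaries will be used repeatedly. First, cyclicity is monotone in the parameter: since $\mathfrak D_{\al'}\hookrightarrow\Dal$ continuously and densely for $\al\le\al'$, a polynomial cyclic in $\mathfrak D_{\al'}$ is cyclic in $\Dal$; in particular it suffices to prove the positive part of (1) at the single value $\al=\tfrac12$. Second, if the one-variable polynomial $g(z_1)$ is cyclic in $D_\al(\DD)$ then $g$, viewed in two variables, is cyclic in $\Dal(\DD^2)$: approximate $1$ by $p_n(z_1)g(z_1)$ in $D_\al(\DD)$ and note that the estimate tensors through the $z_2$-variable. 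Third, a Hurwitz/normal-families argument shows that an irreducible $f$ with no zeros in $\DD^2$ either depends on a single variable---and then, being irreducible, $f=c(w-z_j)$ with $|w|\ge 1$---or satisfies $\ZC(f)\cap\overline{\DD^2}\subseteq\TT^2$; moreover, since $\{f=0\}$ is a real-analytic subset of the compact surface $\TT^2$, in the latter case $\ZC(f)\cap\TT^2$ is either empty, a finite set (the \emph{atoral} case), or a one-dimensional real-analytic curve that contains no arc of a coordinate circle (else $f$ would be divisible by, hence equal to, some $z_j-\zeta$).

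For $\al>1$ the space $\Dal$ is an algebra of functions continuous on $\overline{\DD^2}$, and norm convergence in $\Dal$ forces uniform convergence there. Hence if $\ZC(f)\cap\TT^2\ne\emptyset$ then every $p_nf$ vanishes at a fixed point of $\TT^2$ and cannot tend to $1$, while if $\ZC(f)\cap\TT^2=\emptyset$---equivalently, by the dichotomy above, $\ZC(f)\cap\overline{\DD^2}=\emptyset$---then $1/f$ extends holomorphically past $\overline{\DD^2}$, so $1/f\in\Dal$ and $1=f\cdot(1/f)\in[f]$. This proves (3) and also disposes, in all three regimes, of the case where $f$ has no zeros on $\overline{\DD^2}$.

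The heart of the negative direction is that when $\tfrac12<\al\le 1$ and $f$ is toral but depends on both variables, $f$ is not cyclic. To see this, fix a real-analytic arc $\gamma\subseteq\ZC(f)\cap\TT^2$; by the previous paragraph $\gamma$ contains no coordinate arc. The claim is that the restriction $h\mapsto h|_\gamma$ extends to a bounded operator from $\Dal$ to $L^2(\gamma)$ in the full range $\al>\tfrac12$. Granting this, cyclicity fails at once: for polynomials $p_n$ one has $(p_nf)|_\gamma=(p_n|_\gamma)(f|_\gamma)=0$, so $p_nf\to1$ in $\Dal$ would give $0=1$ in $L^2(\gamma)$. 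Establishing the trace bound is where the real work lies. One first uses the theory of stable polynomials and their determinantal representations to put $\gamma$ into a normal form: in the simplest \emph{monomial} cases (prototype $1-z_1z_2$), $\gamma$ is a closed geodesic $\{p\theta_1+q\theta_2\equiv c\}$ with $\gcd(p,q)=1$, along which the restricted monomials $z_1^kz_2^l$ become the characters of integer frequency $kq-lp$; a direct Cauchy--Schwarz estimate then gives $\|h|_\gamma\|_{L^2}^2\lesssim\|h\|_\al^2$ because $\sup_m\sum_{kq-lp=m}(k+1)^{-\al}(l+1)^{-\al}<\infty$ precisely when $2\al>1$ (and diverges at $\al=\tfrac12$, matching the cyclicity of $1-z_1z_2$ there). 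In general $\gamma$ is parametrized through finite Blaschke products, and one must instead control the Gram matrix of the restricted monomials, equivalently the oscillatory integrals $\int_\gamma e^{i(m\theta_1+n\theta_2)}\,ds$, against the weights $(k+1)^\al(l+1)^\al$ by van der Corput / stationary-phase bounds along $\gamma$. This combination of the structural input and the harmonic analysis on $\gamma$ is the step I expect to be the main obstacle.

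Finally, the positive cases: $f$ atoral with $\al\le1$; $f=c(\zeta-z_j)$ with $\al\le1$; and $f$ arbitrary with $\al\le\tfrac12$. The case $f=c(\zeta-z_j)$ reduces, via the tensor argument above, to the classical one-variable fact that $\zeta-z_1$ is cyclic in $D_\al(\DD)$ for $\al\le1$. For the remaining two cases I would construct $p_n$ explicitly---from truncations of a power-series expansion of $1/f$ together with polynomial factors that suppress the zero set---and bound $\|p_nf-1\|_\al$ using the equivalent norm $\|\cdot\|_{\al,\ast}$, which reduces matters to three weighted integrals involving $\partial_{z_1}(p_nf)$, $\partial_{z_2}(p_nf)$, and $\partial_{z_1z_2}(p_nf)$. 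The essential analytic input is a {\L}ojasiewicz-type lower bound $|f(z)|\gtrsim\dist(z,\ZC(f))^{N}$ near $\TT^2$, from real analytic function theory, which controls how fast the approximants must blow up near the zero set; the point is that these integrals stay bounded exactly in the two claimed regimes---for a finite (hence low-dimensional) zero set whenever $\al\le1$, and for any algebraic zero set at the critical exponent $\al=\tfrac12$, where the computation is the mirror image of the failure of the trace estimate. Combining the four pieces---the algebra argument for $\al>1$, the trace obstruction for toral $f$ when $\al>\tfrac12$, the one-variable lift for $c(\zeta-z_j)$, and the two approximant constructions---yields the stated trichotomy.
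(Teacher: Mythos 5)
Your division into regimes is the right one, and the pieces you do carry out (the algebra argument for $\alpha>1$, the one-variable lift for $c(\zeta-z_j)$, the character computation for straight-line zero sets such as $1\pm z_1z_2$) are fine. But the two places where the actual content of the theorem lives are left as plans rather than proofs, and in the hardest case your plan would not work as stated. The most serious gap is case (1) when $\mathcal{Z}(f)\cap\mathbb{T}^2$ is a curve: you propose truncations of $1/f$ corrected by factors suppressing the zero set, with norms controlled by a \L ojasiewicz lower bound $|f|\gtrsim \dist(\cdot,\mathcal{Z}(f))^{q}$. That bound comes with an uncontrolled exponent $q$, so it cannot by itself produce the sharp threshold $\alpha=1/2$, which depends on the fine structure of how the zero curve exits the bidisk through $\mathbb{T}^2$, not on a crude power estimate; and unlike the finite-zero case there is no supply of higher-multiplicity comparison polynomials vanishing on the whole curve (any such polynomial is a power of $f$ itself, which is circular). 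The paper's proof of this case is entirely different: it uses the determinantal representation $f(z)=c\det(I-U\,\mathrm{diag}(z_1I_n,z_2I_m))$ and the associated vector polynomial $\vec B$ (Proposition 4.2, built from the sums-of-squares decomposition of \cite{Kne10}), together with a Bessel-inequality argument exploiting that $\|\vec v A^{kd}\vec B\|_\alpha^2\lesssim (k+1)^{2\alpha}$, to show any $g\perp[f]$ annihilates a cyclic one-variable polynomial, hence $g=0$. Nothing in your sketch substitutes for this input, and the paper itself flags this machinery as the essentially two-variable heart of the matter.

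The other two steps are incomplete in lesser ways. For the negative direction in $1/2<\alpha\le 1$ you ask for a trace bound $\mathfrak{D}_\alpha\to L^2(\gamma)$, verified only for rational straight lines; for general finite-type curves you would need a Gram-matrix/Schur estimate from the van der Corput decay, which you explicitly defer as ``the main obstacle.'' This is also more than is needed: the paper only requires a single annihilating functional, obtained by exhibiting a smooth measure on the curve with finite Riesz $\alpha$-energy (the energy is a diagonal quadratic form in $\hat\mu$, so the decay $|\hat\mu(k,l)|\lesssim(k^2+l^2)^{-1/(2\tau)}$ plugs in directly), and then invoking the capacity-implies-non-cyclicity criterion of \cite{BCLSS13II} via the Cauchy transform in $\mathfrak{D}_{-\alpha}$; curves without a type-$2$ point are handled by a M\"obius change of variables. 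Finally, for finitely many boundary zeros with $\alpha\le 1$ your explicit-approximant construction is not carried out; the paper instead uses \L ojasiewicz in a comparison argument --- it shows $g(z_1)h(z_2)/f$ is $C^2$ on $\mathbb{T}^2$ for suitable one-variable polynomials $g,h$ vanishing only on $\mathbb{T}$, so that $g(z_1)h(z_2)\in[f]$, and then cites cyclicity of $g(z_1)h(z_2)$ for $\alpha\le 1$. You should either adopt these arguments or supply complete proofs of the determinantal step, the restriction estimate, and the finite-zero construction; as written, the proposal does not yet prove parts (1) and (2).
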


The new and non-trivial content of the theorem is the case $0 <
\alpha \leq 1$ and for polynomials $f$ with
$\mathcal{Z}(f)\cap \partial \DD^2\neq \varnothing$.  Recall that any
polynomial that vanishes {\it inside} the bidisk cannot be cyclic for
any $\alpha$, a polynomial that does not vanish on $\overline{\DD}^2$
is cyclic for all $\alpha$, and polynomials that do not vanish in the
bidisk are known to be cyclic for $\alpha \leq 0$, see \cite{NGN}. In
addition, it is known that a function $f \in \mathfrak{D}_{\alpha}$
for $\alpha > 1$ is cyclic if and only if it does not vanish on the
closure of the bidisk, since in that case $\mathfrak{D}_{\alpha}$ is
an algebra.  

The analogous problem of characterizing cyclicity of polynomials for
Dirichlet-type spaces in one variable was solved by Brown and Shields
\cite{BS84}. In that context, a polynomial with no zeros inside the
disk is always cyclic for $\alpha \leq 1$. The problem without the
assumption of $f$ being a polynomial is already quite difficult in one
dimension, and the so-called \emph{Brown-Shields conjecture}---a
proposed characterization of cyclicity in the Dirichlet space---still
remains open.  Generalizing our theorem to three or more variables
also remains open, as much of our approach in item (1) of the main
theorem relies heavily on an essentially two variable result: the
existence of determinantal representations for polynomials with no
zeros on the bidisk.  See \cite{AM05, GKW, Kne09, VV, HW}.

We now present a plan of the paper. Section \ref{prelim} contains some
preliminary material concerning polynomials in two variables,
including determinantal representation formulas, as well as estimates
on Fourier coefficients of measures supported on curves in the torus,
and a brief discussion of the notion of $\alpha$-capacity and its
relevance to the cyclicity problem.

In Section \ref{finitezeros} we prove that a polynomial that is non-zero in the bidisk and
vanishes at only finitely many points in the distinguished boundary is cyclic for {\it all} $\alpha \leq 1$, see Theorem \ref{thmfinitezeros}. The
proof uses \L ojasiewicz's inequality and results on cyclicity of product functions.

Next, in Section \ref{smallalpha}, we show that any polynomial with
$\mathcal{Z}(f)\cap \DD^2=\varnothing$, whether vanishing at finitely
many points or along a curve, is cyclic whenever $\alpha \leq
1/2$. This is the content of Theorem \ref{smallalphathm}.  The
argument relies on the determinantal formulas mentioned above.

Finally, in Section \ref{Curves}, we use the notion of
$\alpha$-capacity, along with estimates on Fourier coefficients of
measures supported on subvarieties of the torus, to give conditions on
zero sets that imply non-cyclicity in the Dirichlet spaces
$\mathfrak{D}_\alpha$ in the remaining range $1/2< \alpha \leq
1$. Thus, Theorem \ref{bigalphathm} completes the classification
enunciated in the Main Theorem.  In fact, some of the methods in
Section \ref{Curves} apply to more general functions not just
polynomials.

\textit{Acknowledgments.}  Part of this work was done while Sola was
visiting the University of Tennessee, Knoxville in March 2014. Thanks
go to Prof.\! Stefan Richter and the Department of Mathematics for their
hospitality. Kosi\'nski is greatly indebted to Prof.\! Thomas
Ransford for his generosity and for many valuable discussions.

\section{Preliminaries}\label{prelim}
\subsection{Function space properties} \label{funcspaceprops} 

We summarize certain basic facts concerning multipliers and M\"obius
invariance we shall need later on.

First, we note that all polynomials in two variables whose zero sets
do not intersect the bidisk are outer, hence are natural candidates
for being cyclic in $\mathfrak{D}_{\alpha}$ when $\alpha \in [0,1]$.
Furthermore, if $f$ is a polynomial, then $f$ belongs to the {\it
  multiplier space}
\[M(\mathfrak{D}_{\alpha})=\{\phi\colon \mathbb{D}^2\to
\mathbb{C}\colon \phi f\in \mathfrak{D}_{\alpha}, \, \textrm{for
  all}\, f\in \mathfrak{D}_{\alpha}\},\] for all $\alpha$ (see
\cite{JR06} for more on multipliers). In addition, holomorphic
functions that extend to a bigger polydisk also furnish multipliers,
as can be seen by approximating such a function by a finite truncation
of its power series.

We have already seen that constant functions, are cyclic in
$\mathfrak{D}_{\alpha}$. Moreover, since all polynomials are
multipliers, standard techniques (as those used for example in
\cite[Proposition 8]{BS84}) show that it is enough to check cyclicity
for {\it irreducible polynomials} that do not vanish on the bidisk: a
product of multipliers is cyclic if and only if each factor is cyclic.
In particular, multiplicity does not matter: $f^2$ is cyclic whenever
the polynomial $f$ is cyclic.

The automorphism group of the
bidisk consists of the M\"obius transformations
\begin{equation}
m_{a,b}\colon(z_1,z_2)\mapsto \left(\frac{a-z_1}{1-\bar{a}z_1},\frac{b-z_2}{1-\bar{b}z_2}\right),
\label{mobmap}
\end{equation}
where $a,b\in \DD$, as well as rotations of individual variables and
permutation of variables. Using the integral norm for $\Dal$, one can
show that composition with any $m_{a,b} \in
\mathrm{Aut}(\mathbb{D}^2)$ defines a bounded invertible operator, and
hence
\[\|f\circ m_{a,b}\|_{\alpha}\asymp \|f\|_{\alpha}, \quad f\in
\mathfrak{D}_{\alpha}.\] Just as in one variable, \emph{equality} and
not just equivalence of norms for all automorphisms actually
characterizes the Dirichlet space $\mathfrak{D}=\mathfrak{D}_1$ among
Hilbert spaces of analytic functions (see \cite{EKMRBook} and
\cite{Kap94} for details).

\subsection{General remarks on polynomials and their zero varieties} \label{polyremarks}
Here we discuss of polynomials in several variables, their
zero sets
\[\mathcal{Z}(f)=\{(z_1,z_2)\in \mathbb{C}^2\colon f(z_1,z_2)=0\},\]
and factorization properties. 

It is well-known from commutative algebra that $\mathbb{C}[z_1,z_2]$,
the ring of polynomials in two complex variables, is a unique
factorization domain, meaning any polynomial can be written
as a finite product of irreducible polynomials and this
factorization is unique up to units and ordering of irreducible
factors. Whereas the fundamental theorem of algebra asserts that
irreducible polynomials over $\mathbb{C}$ are linear, irreducible
polynomials in two variables come in many different forms.

Let us now closely examine the structure of polynomials that do not
vanish in the bidisk, but have zeros on its boundary.  It suffices to
examine irreducible polynomials which do not depend on one variable
alone, because these polynomials are already addressed in the earlier
work \cite{BCLSS13II}.  So, let $f \in \CC[z_1,z_2]$ be irreducible with no zeros in
$\DD^2$ and assume $f$ depends on both variables.

The set $\mathcal{Z}(f)\cap \mathbb{T}^2$ can be described in terms of
zeros of real polynomials (by considering real and imaginary parts of
$f$), and this implies in particular that $\mathcal{Z}(f)\cap
\mathbb{T}^2$ is a {\it semi-algebraic set}.  By a result by Stanis\l
aw \L ojasiewicz (see \cite[pp.1584-1585]{Loja1993} for a precise
statement, and \cite{Loja1993,KP02} for background material), such
sets split into finitely many connected sets which are again
semi-algebraic.  It follows that the intersection of the zero set of a
polynomial with $\mathbb{T}^2$ consists of finitely many points and a
union of finitely many subvarieties of $\mathbb{T}^2$.  We can say
much more about the zero set of $f$.

First, our irreducible polynomial $f$ cannot vanish on
$\overline{\DD}^2\setminus \TT^2$.  In fact, a zero on $\DD\times \TT$
forces $f$ to vanish on a line $z_2=\zeta$ for some $\zeta \in \TT$
and hence $z_2-\zeta$ would divide $f$, which we have ruled out.
This is because $g_{\zeta} (z_1) := f(z_1,\zeta)$ is non-vanishing in $\DD$ as
a function of $z_1$ for fixed $\zeta \in \DD$, and by Hurwitz's theorem
as $\zeta$ goes to a point on $\TT$, $g_\zeta$ is either non-vanishing
in $\DD$ or identically zero.

Next, if $f$ has bidegree $(n,m)$---degree $n$ in $z_1$ and $m$ in
$z_2$---then we define 
\[
\tilde{f}(z) = z_1^nz_2^m \overline{f(1/\bar{z}_1, 1/\bar{z}_2)}.
\]
There are now two cases depending on whether or not $f$ divides
$\tilde{f}$.  If $f$ does not divide $\tilde{f}$, then $f$ has only
finitely many zeros on $\TT^2$. Indeed, zeros of $f$ on $\TT^2$ will
be common zeros of $f$ with $\tilde{f}$ and two bivariate polynomials
have infinitely many common zeros if and only if they have a common
factor by B\'ezout's theorem.  If $f$ divides $\tilde{f}$, then $f =
\lambda \tilde{f}$ for some $\lambda \in \TT$ since $f$ and
$\tilde{f}$ have the same degree and $|f| = |\tilde{f}|$ on $\TT^2$.
Because of this symmetry, if we set $\EE = \CC\setminus
\overline{\DD}$, then 
\[
\mathcal{Z}(f) \subset (\DD \times \EE) \cup \TT^2 \cup (\EE\times
\DD).
\]

It turns out that our polynomial $f$ possesses the following
determinantal representation
\begin{equation} \label{detpolyformula}
f(z) = c \det\left[ I_{n+m}
  - U \begin{pmatrix}  z_1 I_n & 0 \\ 0 & z_2 I_m\end{pmatrix} \right]
\end{equation}
where $c$ is a constant, $U$ is an $(n+m)\times(n+m)$ unitary matrix,
$I_n,I_m,I_{n+m}$ are identity matrices.  This representation easily
extends to all polynomials non-vanishing on the bidisk all of whose
irreducible factors intersect $\TT^2$ on an infinite set.  This
formula and the techniques used to prove it from \cite{Kne09, Kne10}
will play a fundamental role in our proof in Section
\ref{smallalpha}. The determinantal formula is also useful for
generating concrete examples as in Section \ref{Curves}.

We mention in passing that these polynomials are closely related to a
special class of varieties studied by Agler and M{\McC}Carthy
\cite{AM05} called {\it distinguished varieties}. Such varieties are
given as the zero set $\mathcal{Z}(g)$ of a polynomial $g$ satisfying
\[
\mathcal{Z}(g) \subset \DD^2 \cup \TT^2 \cup \EE^2.
\]
This definition is equivalent to the original definition of Agler and
M{\McC}Carthy and is sometimes easier to work with (see \cite{Kne09}).
The requirement can be phrased as requiring that $\mathcal{Z}(g)$
exits the bidisk through the distinguished boundary.  Since these
varieties must intersect $\DD^2$, they are tangential to the topic of
this paper; however, the formula \eqref{detpolyformula} was discovered
in \cite{Kne09} while giving a new proof of a similar formula for
distinguished varieties from \cite{AM05}.

\subsection{Fourier analysis and measures supported on curves}\label{ss-oscillatory}
We will use a generalization of van der Corput's lemma from the theory
of oscillatory integrals (see Stein \cite[Section
VIII.3.2]{SteinHarmAnaBook}) that provides estimates on Fourier
coefficients of measures supported on submanifolds. Such estimates
will be useful in Section \ref{Curves} to establish existence of
measures of finite energy.

Let $S$ be a smooth curve
in $\mathbb{T}^2$, which we identify with $[0,2\pi)\times [0,2\pi)$ in
this section. Consider a smooth parametrization
$\phi: I\to \mathbb{T}^2$ where $I$ is an interval
in $\mathbb{R}$ (and $d\phi/dt\neq 0$ on $I$). We define the \emph{type} of a point
$\xi=\phi(x)\in \phi(I)$ as the smallest $\tau$ such that, for all unit
vectors $\eta\in \mathbb{R}^2$ there exists an integer $k \leq \tau$ such that we have
\begin{align}\label{e-type}
\left[\frac{d^k\phi}{dt^k}\cdot\eta\right]_{t=x} \neq 0.
\end{align}
A curve $S$ is said to have type $\tau$ if the maximum of the types of
its points is $\tau$. In particular, a curve is of type $2$ if it has
everywhere non-vanishing curvature.

In Section \ref{Curves}, we discuss an example in detail, however it
is useful to discuss the type of a curve which is simply a piece of a
graph $\phi(t) = (t, \psi(t))$ with $\psi'(0)\ne 0$.  If $\eta \in
\RR^2$ is a unit vector with $\phi'(t) \cdot \eta =
\eta_1+\psi'(0)\eta_2 = 0$, then $\eta_2 \ne 0$. If we also had $0 =
\phi''(0) \cdot \eta = \psi''(0) \eta_2$, then necessarily $\psi''(0)
= 0$.  Thus, the curve has type 2 at $t=0$ if $\psi''(0) \ne 0$ and it
will not have type 2 if $\psi''(0) = 0$.  It will be useful later to
notice that if we do not have type 2 at a point we can apply a
M\"obius transformation to get a point of type 2.  What we have in
mind is if $\phi$ parametrizes a piece of the zero set of $f$ on
$\TT^2$, then $(\gamma(t), \psi(t))$ parametrizes a piece of the zero
set of $f \circ m_{a,0}$ where $\gamma(t) = \arg m_a(e^{it})$.  But,
this new curve has type 2 at $t=0$.  One can compute that $\gamma'(0)
>0$ and $\gamma''(0) \ne 0$ as long as $\text{Im}(a) \ne 0$.  So, if
$\psi''(0) = 0$, the two equations $\eta_1 \gamma'(0) + \eta_2
\psi'(0) = 0$ and $\eta_1 \gamma''(0) = 0$ cannot hold simultaneously.
The type of a curve will give us control on the growth of Fourier
coefficients of a measure supported on the curve.

The Fourier coefficients of a finite Borel measure on $\mathbb{T}^2$ are given by
\begin{align}\label{Fourier}
\hat{\mu}(k,l)=\int_{\mathbb{T}^2}e^{-i(k\theta_1+l\theta_2)}d\mu(\theta_1,\theta_2),
\quad k,l \in \mathbb{Z}.
\end{align}
The measures we work with will be of the form
\begin{equation}
d\mu(x)=\psi(x)d\sigma(x), \quad x\in S\subset \mathbb{T}^2,
\label{smoothmeasure}
\end{equation}
where $S$ is a curve in $\mathbb{T}^2$ of finite type, $\psi\in
C^{\infty}_0(S)$ is non-negative, and $\sigma$ is the measure on $S$
induced by pulling back to Lebesgue measure on the line using the
parametrization of $S$. For such measures, we have the following
estimate (cf. Stein \cite{SteinHarmAnaBook}, Theorem VIII.2).
\begin{thm}[Decay of Fourier coefficients of measures on varieties]\label{t-oscillatory}
If $S$ is of finite type $\tau\in \mathbb{N}$, and $\mu$ is of the form \eqref{smoothmeasure}, then there exists a constant $C>0$ such that
\begin{align*}
|\hat{\mu}(k,l)|\le C (k^2+l^2)^{-1/(2\tau)}, \quad k,l\in \mathbb{Z}.
\end{align*}
The constant $C$ may depend on $S$ and $\psi$, but not on $k$ and $l$.
\end{thm}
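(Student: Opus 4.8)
The plan is to reduce the estimate to a one‑dimensional oscillatory integral and then invoke van der Corput's lemma after an appropriate decomposition. Parametrizing the relevant portion of $S$ by $\phi=(\phi_1,\phi_2)\colon I\to\mathbb{T}^2$ (lifting to $\mathbb{R}^2$ so that $\phi_1,\phi_2$ are genuine real‑valued functions on $I$), and using that $\sigma$ is by construction the push‑forward of Lebesgue measure on $I$ under $\phi$, one writes
\[
\hat\mu(k,l)=\int_I e^{-i(k\phi_1(t)+l\phi_2(t))}\,\psi(\phi(t))\,dt=\int_I e^{-i\rho\,\Phi_\eta(t)}\,\tilde\psi(t)\,dt ,
\]
where $\rho=(k^2+l^2)^{1/2}$, $\eta=(k,l)/\rho$ is a unit vector, $\Phi_\eta(t):=\eta\cdot\phi(t)=\eta_1\phi_1(t)+\eta_2\phi_2(t)$, and $\tilde\psi:=\psi\circ\phi\in C^\infty_0(I)$. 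Since $|\hat\mu(k,l)|\le\mu(\mathbb{T}^2)$ trivially, it suffices to prove a bound $|\hat\mu(k,l)|\le C\rho^{-1/\tau}$ for all large $\rho$, with $C$ independent of the direction $\eta$.

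First I would record what finite type buys. The condition that each point $\phi(x)$ has type at most $\tau$ says precisely that no unit vector $\eta$ is orthogonal to all of $\phi'(x),\phi''(x),\dots,\phi^{(\tau)}(x)$, i.e.\ that these vectors span $\mathbb{R}^2$; equivalently $\max_{1\le j\le\tau}|\Phi_\eta^{(j)}(x)|>0$ for every unit $\eta$. This is an open condition in $x$, so it continues to hold on a compact neighborhood $K'\subset I$ of $\operatorname{supp}\tilde\psi$, and by continuity together with compactness of $K'$ times the unit circle there is $\delta>0$ with $\max_{1\le j\le\tau}|\Phi_\eta^{(j)}(t)|\ge\delta$ for all $t\in K'$ and all unit $\eta$.

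Next I would localize so that on each small piece a single derivative $\Phi_\eta^{(j)}$ of order $j\le\tau$ is bounded below. Around any $(t_0,\eta_0)$, choose $j\le\tau$ with $|\Phi_{\eta_0}^{(j)}(t_0)|\ge\delta$; by joint continuity the inequality $|\Phi_\eta^{(j)}(t)|\ge\delta/2$ persists on a product of an open interval $V\ni t_0$ and an arc $W\ni\eta_0$. These products cover the compact set $K'\times(\text{unit circle})$, so finitely many of them, say $\{V_b\times W_b\}_{b=1}^B$ with associated orders $j_b\le\tau$, suffice. For any fixed unit $\eta$, the intervals $\{V_b:\eta\in W_b\}$ already cover $K'$; cutting $\tilde\psi$ up by a smooth partition of unity subordinate to that subfamily—chosen from the finitely many possible such subfamilies, hence with uniformly bounded $C^\tau$ norms—one writes $\tilde\psi=\sum_b\chi_b\tilde\psi$ with each $\chi_b\tilde\psi$ supported in the interval $V_b$ and $|\Phi_\eta^{(j_b)}|\ge\delta/2$ there. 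On a piece with $j_b\ge2$, van der Corput's lemma (Stein \cite{SteinHarmAnaBook}) applied to the phase $\rho\Phi_\eta$ with the $C^\infty_0$ amplitude $\chi_b\tilde\psi$ gives $\bigl|\int_{V_b}e^{-i\rho\Phi_\eta}\chi_b\tilde\psi\,dt\bigr|\le C_b\,\rho^{-1/j_b}\le C_b\,\rho^{-1/\tau}$ for $\rho\ge1$; on a piece with $j_b=1$, where $|\Phi_\eta'|\ge\delta/2$, a single integration by parts gives the even smaller bound $O(\rho^{-1})$. All constants depend only on $\delta$, $\tau$, $\|\Phi\|_{C^{\tau+1}}$ and the finitely many cut‑offs, not on $\eta$; summing the $B$ pieces yields $|\hat\mu(k,l)|\le C\rho^{-1/\tau}=C(k^2+l^2)^{-1/(2\tau)}$.

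I expect the genuine difficulty to be exactly the uniformity in the direction $\eta$ in the localization step: both the order $j$ of the derivative that is large and the interval on which it stays large vary with $\eta$, and one must keep the number of pieces and the sizes of the cut‑off functions from degenerating as $\eta$ runs over the unit circle—this is what compactness of $K'\times(\text{unit circle})$ together with the finiteness of the family of subcovers is designed to handle. A minor preliminary point is that if $S$ is not the image of a single chart $\phi$, one first uses a partition of unity on $S$ to split $\mu$ into finitely many measures, each of the form treated above.
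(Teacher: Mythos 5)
The paper does not actually prove this theorem: it is quoted from Stein \cite{SteinHarmAnaBook} (Theorem VIII.2, cf.\ Section VIII.3.2), so there is no in-paper argument to compare with, and your proposal is in effect a correct reconstruction of the standard proof of the cited result. Your reduction to the one-variable oscillatory integral with phase $\rho\,\Phi_\eta$, the compactness argument on $K'\times(\text{unit circle})$ giving a uniform lower bound $\delta$ on some derivative of order $\le\tau$, and the van der Corput estimate on each localized piece (with the scaling $|(\rho\Phi_\eta)^{(j_b)}|\ge\rho\delta/2$ yielding $O(\rho^{-1/j_b})\le O(\rho^{-1/\tau})$ for $\rho\ge1$) are all sound; the two delicate points are handled correctly, namely uniformity in the direction $\eta$ (your finitely-many-subfamilies partition-of-unity device keeps the cutoffs and the number of pieces under control) and the order-one case, where integrating by parts directly against the smooth compactly supported amplitude sidesteps the monotonicity hypothesis in the $k=1$ form of van der Corput's lemma.
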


\subsection{Riesz $\alpha$-capacity}\label{alphacapa}
In Section 5, we will use an analogue of logarithmic capacity in one variable to analyze the cyclicity properties of polynomials whose zero sets meet the torus along curves.  We define the following product capacity on $\mathbb{T}^2$, which was
considered in \cite{BCLSS13II}.

\begin{defn}
Let $E\subset \mathbb{T}^2$ be a Borel set. We say that a probability
measure $\mu$ supported in $E$ has {\it finite (Riesz) $\alpha$-energy} ($0<\alpha<1$) if
\begin{equation*}
I_{\alpha}[\mu]=\int_{\mathbb{T}^2}\int_{\mathbb{T}^2}\frac{1}{|e^{i\theta_1}-e^{i\eta_1}|^{1-\alpha}}\frac{1}{|e^{i\theta_2}-e^{i\eta_2}|^{1-\alpha}}d\mu(\theta_1,\theta_2)d\mu(\eta_1,\eta_2)<\infty.
\label{logenergy}
\end{equation*}
We define the \emph{(Riesz) $\alpha$-capacity} by taking the infimum
\begin{align*}
\alphacap(E):= 1/\inf\{I_{\alpha}[\mu]: \mu\in \mathcal{P}(E)\},
\end{align*}
where $\mathcal{P}(E)$ denotes the set of all probability measures with support contained in $E$.
If $E$ supports no such measure, then $\alphacap(E)=0$.

When $\alpha=1$, we use the kernel $\log(e/|e^{i\theta_1}-e^{i\eta_1}|)\log(e/|e^{i\theta_2}-e^{i\eta_2}|)$
in the definitions of energy and capacity.
\end{defn}
In \cite[Section 4]{BCLSS13II}, it was shown that functions
$f\in\mathfrak{D}_\alpha$ with
$\mathrm{cap}_{\alpha_0}(\mathcal{Z}(f)\cap \TT^2)>0$ for some
$0<\alpha_0<1$ are not cyclic in $\mathfrak{D}_\alpha$ for all
$\alpha\ge \alpha_0$.  Here, $\mathcal{Z}(f)\cap\TT^2$ refers to the
zero set of $f$ on $\TT^2$.  (As explained in \cite{BCLSS13II}, the
non-tangential boundary values of $f$ on $\TT^2$ exist
quasi-everywhere with respect to $\al$-capacity so that we can make
sense of the capacity of $\mathcal{Z}(f)\cap\TT^2$.)  In brief, the
proof involves identifying the dual space of $\mathfrak{D}_{\alpha}$
with $\mathfrak{D}_{-\alpha}$ and considering the Cauchy integral of a
measure $\mu$ supported on $\mathcal{Z}(f)\cap \TT^2$ having finite
$\alpha$-energy.  The finiteness of the energy then guarantees the
membership of the Cauchy transform in $\mathfrak{D}_{-\alpha}$, and
the fact that its generating measure lives on the zero set of $f$
implies that the dual pairing annihilates the elements of $[f]$.

For two Borel sets $E, F\subset \mathbb{T}^2$ with $E\subset F$ we have $\mathcal{P}(E)\subset \mathcal{P}(F)$, and so $\alphacap(E)\le\alphacap(F)$. This means that if we want to show that a given zero set has positive capacity, it is enough to prove that some subset of the given zero set has this property. It is often useful to express the $\alpha$-energy of $\mu$ in terms of its Fourier coefficients \eqref{Fourier}:
\begin{equation}
I_{\alpha}[\mu]\asymp 1+\sum_{k=1}^{\infty}\frac{|\hat{\mu}(k,0)|^2}{k^{\alpha}}
+\sum_{l=1}^{\infty}\frac{|\hat{\mu}(0,l)|^2}{l^{\alpha}}+\frac{1}{2}\sum_{k \in \mathbb{Z}\setminus \{0\}}
\sum_{l=1}^{\infty}\frac{|\hat{\mu}(k,l)|^2}{|k|^{\alpha} l^{\alpha}}.
\label{logenergycoeffs}
\end{equation}
\section{Polynomials with finitely many zeros}\label{finitezeros}
We now turn to the main result. We begin by tackling the case when $f$ is a polynomial whose zero set satisfies $0 < \#[\mathcal{Z}(f)\cap \mathbb{T}^2]<\infty$.

Building on the work of H\aa kan Hedenmalm on closed ideals in $\mathfrak{D}_2$ in \cite{Hed88}, the authors in \cite{BCLSS13II} proved that polynomials (and in fact, certain functions in $\mathfrak{D}_2$) with no zeros in the bidisk that vanish at a single point of  the torus are cyclic in $\mathfrak{D}_{\alpha}$ for all $\alpha\leq 1$.  Hedenmalm's proof does not seem to extend directly to the case of finitely many zeros.  Instead, we resolve the question
of cyclicity for polynomials with finitely many zeros on the distinguished boundary using a different argument.

\begin{thm} \label{thmfinitezeros}
Let $f \in \CC[z_1,z_2]$ have no zeros in $\DD^2$ and finitely many
zeros on $\TT^2$.  Then, $f$ is cyclic in $\mathfrak{D}_{\alpha}$ for
$\alpha \leq 1$.
\end{thm}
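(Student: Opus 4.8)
The plan is to reduce the case of finitely many zeros on $\TT^2$ to the already-known single-point case from \cite{BCLSS13II}, using a product/localization argument together with \L ojasiewicz's inequality. Concretely, suppose $\ZC(f)\cap\TT^2 = \{\zeta^{(1)},\dots,\zeta^{(N)}\}$. The idea is to write $f$, up to a cyclic unit, as a product of finitely many functions each of which vanishes at only \emph{one} of these points. Since we already know that any polynomial (or suitable holomorphic function in $\Dal$) with no zeros in $\DD^2$ and a single zero on $\TT^2$ is cyclic for $\alpha\le 1$, and since a product of multipliers is cyclic iff each factor is cyclic (see Section \ref{funcspaceprops}), this would finish the proof---provided the factors are genuine multipliers of $\Dal$ and satisfy the hypotheses of the single-point result.

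The first step is to construct the factorization. For each $j$, choose a small open neighborhood $V_j\subset\overline{\DD^2}$ of $\zeta^{(j)}$ so that the $\overline{V_j}$ are pairwise disjoint and miss all the other zeros. Pick a smooth partition-type function or, better, an explicit holomorphic "bump''---for instance a product of Möbius-type factors $(1 - \bar a_1 z_1)^{-s}(1-\bar a_2 z_2)^{-s}$ or polynomial factors---that is comparable to $1$ near $\zeta^{(j)}$ and comparable to a constant away from it, chosen so that $g_j := f \cdot u_j$ vanishes (to the right order) only at $\zeta^{(j)}$ on $\overline{\DD^2}$, where $u_j$ is a non-vanishing multiplier. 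The cleanest route, and the one I would pursue, is to localize the cyclicity problem: it suffices to produce polynomials $p_n$ with $p_n f\to 1$ pointwise on $\DD^2$ and $\|p_n f - 1\|_\alpha$ bounded. Split $1 = \sum_j \chi_j$ near the zeros plus a piece supported away from $\ZC(f)\cap\TT^2$; on the latter piece $1/f$ is already a bounded multiplier, and on each $V_j$ one invokes the single-point construction applied to (a Möbius image of) $f$ centered at $\zeta^{(j)}$. One must verify that gluing the local approximants with smooth cutoffs keeps the $\Dal$-norms bounded; this is where one needs that $\Dal$ is, for $\alpha\le 1$, controlled by the integral norm $\|\cdot\|_{\alpha,\ast}$ and that cutoff functions act boundedly.

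The role of \L ojasiewicz's inequality is to quantify how fast $|f|$ decays near each $\zeta^{(j)}$: since $\ZC(f)\cap\TT^2$ is finite, near $\zeta^{(j)}$ one has $|f(z)| \gtrsim \dist(z, \{\zeta^{(j)}\})^{M}$ for some exponent $M$ on $\overline{\DD^2}$ (using that $f$ has no zeros in $\DD^2$, so the only zero nearby is the isolated boundary point). This polynomial lower bound is exactly what makes the one-variable-style constructions for a single boundary zero go through with controlled norms---it replaces the explicit radial decay used in the disk case. In fact the same inequality shows that the relevant one-variable slices and derivatives of $1/f$ are integrable against the weights $w^{1-\alpha}$ for $\alpha\le 1$.

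The main obstacle I anticipate is the bookkeeping in the gluing step: ensuring that the multiplier norm of the cutoff functions and the $\Dal$-norms of the products $p_n f - 1$ stay uniformly bounded when one passes from local approximants near each $\zeta^{(j)}$ to a global one. In two variables this is genuinely more delicate than in the single-point case because the mixed derivative term $\partial_{z_1 z_2}$ in $\|\cdot\|_{\alpha,\ast}$ couples the two directions, and cutoffs depending on both variables can produce cross terms. I would handle this by choosing the cutoffs to be of tensor-product (or Möbius) form adapted to each $\zeta^{(j)}$, so that the mixed derivative factors, and by using the automorphism-invariance $\|f\circ m_{a,b}\|_\alpha \asymp \|f\|_\alpha$ to move each $\zeta^{(j)}$ to a standard position before applying the known single-point result. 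A secondary point to check carefully is that the "unit'' multipliers $u_j$ used to strip away the other zeros are honestly invertible in the multiplier algebra (they extend holomorphically past $\overline{\DD^2}$, hence are multipliers, and are non-vanishing on $\overline{\DD^2}$, hence so are their inverses), which is what licenses the reduction "cyclic product $\Leftrightarrow$ cyclic factors.''
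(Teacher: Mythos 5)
Your strategy---reduce to the single-zero case of \cite{BCLSS13II} by either factoring $f$ into pieces with one boundary zero each, or by gluing local approximants with cutoffs---has two genuine gaps. First, the proposed factorization is incoherent as written: if $u_j$ is a non-vanishing multiplier, then $g_j = f\cdot u_j$ has exactly the same zero set as $f$, so it cannot ``vanish only at $\zeta^{(j)}$''; to strip away the other zeros you would have to divide by something vanishing there, and that quotient is no longer a bounded multiplier (and for an irreducible $f$ no polynomial factorization separating the boundary zeros exists). Second, the localization step fails at the level of the function space: $\Dal$ consists of holomorphic functions and the approximants $p_n$ must be polynomials, so writing $1=\sum_j\chi_j$ with smooth (non-holomorphic) cutoffs and gluing local approximants produces objects that are not even in $\Dal$; there is no partition of unity in the holomorphic category, and ``cutoff functions act boundedly'' has no meaning here. (The paper itself remarks that Hedenmalm's single-point method does not seem to extend to finitely many zeros, which is exactly the reduction you are attempting.) A smaller but real issue: \L ojasiewicz gives $|f(z)|\gtrsim \dist(z,\ZC(f))^q$ with distance to the \emph{full complex} zero set, which near $\zeta^{(j)}$ is a curve through that point, not the point itself; so your claimed bound $|f(z)|\gtrsim |z-\zeta^{(j)}|^{M}$ on $\overline{\DD^2}$ does not follow as stated. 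The paper avoids this by restricting to $\TT^2$, where the zero set of $|f(e^{ix_1},e^{ix_2})|^2$ really is finite, so distance-to-the-points bounds are legitimate.

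The actual proof is a global comparison rather than a localization. Using \L ojasiewicz on the torus, one shows (Lemma \ref{product}) that for one-variable polynomials $g(z_1)$, $h(z_2)$ vanishing to sufficiently high order at the first and second coordinates of the boundary zeros, the quotient $Q=g(z_1)h(z_2)/f$ is $C^2$ on $\TT^2$. Then $Q$ is holomorphic on $\DD^2$ (since $f\neq 0$ there) with Fourier coefficients summable against $(k+1)^2(l+1)^2$, so $Q\in\Dal$ for $\alpha\le 2$, hence $g(z_1)h(z_2)=Qf\in f\Dal=[f]$. Since $g(z_1)h(z_2)$ is a product of one-variable polynomials with zeros only on $\TT$, it is cyclic for $\alpha\le 1$ by \cite{BCLSS13II}, so $[f]\supseteq[g(z_1)h(z_2)]=\Dal$. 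No M\"obius normalization, no single-point theorem, and no gluing are needed; the only delicate point is the smoothness of $Q$, which is exactly what the torus version of \L ojasiewicz's inequality delivers.
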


We shall use \L ojasiewicz's inequality as recorded in \cite{KP02}.

\begin{thm}[\L ojasiewicz's inequality]
Let $f$ be a nonzero real analytic function on an open set $U\subset
\RR^n$.  Assume the zero set $\mathcal{Z}(f)$ of $f$ in $U$ is nonempty. Let $E$ be a compact subset of $U$.  Then there are constants $C >0$
and $q \in \NN$, depending on $E$, such that
\[
|f(x)| \geq C \cdot \dist (x,\mathcal{Z}(f))^q
\]
for every $x \in E$.
\end{thm}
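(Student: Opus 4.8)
The plan is to prove the inequality by first localizing, then reducing via the Weierstrass preparation theorem to a monic polynomial in one distinguished variable, and finally arguing by induction on the dimension $n$, with the discriminant playing the central role. Since $E$ is compact, it suffices to produce for each $x_0\in E$ a neighborhood $V$ and constants $C,q$ for which the estimate holds on $V\cap E$: finitely many such neighborhoods cover $E$, and because $\dist(\cdot,\mathcal{Z}(f))$ is bounded on $E$ one may replace the various local exponents by their maximum and shrink the constant accordingly to obtain a single global pair $(C,q)$. If $f(x_0)\neq 0$ the estimate is immediate since $|f|$ is bounded below near $x_0$, so all content lies at points $x_0\in\mathcal{Z}(f)\cap E$, and after translating I may take $x_0=0$. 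Note also that replacing $\mathcal{Z}(f)$ by the smaller set $\mathcal{Z}(f)\cap\overline{V}$ only increases the distance, so it is enough to bound $|f(x)|$ below by a power of $\dist(x,\mathcal{Z}(f)\cap\overline{V})$.

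Next I would reduce to a polynomial. After a generic linear change of coordinates the slice $t\mapsto f(0,\dots,0,t)$ is not identically zero near $0$, vanishing to some finite order $d$, and Weierstrass preparation factors $f=u\cdot P$ on a neighborhood of $0$, where $u$ is a real analytic unit (so $|u|$ is bounded above and below) and $P(x',x_n)=x_n^d+a_{d-1}(x')x_n^{d-1}+\cdots+a_0(x')$ is a Weierstrass polynomial whose coefficients are real analytic and vanish at $x'=0$. Since $|f|\asymp|P|$ and $\mathcal{Z}(f)=\mathcal{Z}(P)$ near $0$, it suffices to treat $P$; passing to the square-free part and absorbing the resulting factor into a larger exponent, I may further assume $P$ has no repeated factors, so that its discriminant $\Delta(x')$ is a real analytic function of the $n-1$ variables $x'$ that is not identically zero.

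The heart of the matter is an induction on $n$. Writing $P(x',x_n)=\prod_{i=1}^d\big(x_n-\zeta_i(x')\big)$ with $\zeta_i(x')\in\CC$, one has $|P(x',x_n)|=\prod_i|x_n-\zeta_i(x')|$, while $\dist\big((x',x_n),\mathcal{Z}(P)\big)$ is governed by the nearest real root. Away from $\mathcal{Z}(\Delta)$ the roots are simple and depend analytically on $x'$, so an elementary one-variable estimate gives the comparison with uniform constants; the difficulty is concentrated on $\mathcal{Z}(\Delta)$, where roots collide and one must control their modulus of continuity and relate proximity to a complex root of $P$ to proximity to the real zero set, uniformly as $x'$ approaches $\mathcal{Z}(\Delta)$. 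I would handle this by feeding in the inductive \L ojasiewicz inequality for $\Delta$ (a function of $n-1$ variables) together with the curve selection lemma for semianalytic sets: any sequence approaching $\mathcal{Z}(P)$ can be realized along a real analytic arc, along which both $|P|$ and $\dist(\cdot,\mathcal{Z}(P))$ admit Puiseux expansions, and comparing leading exponents yields a rational exponent that may be rounded up to the required integer $q$. This collision analysis, making the estimates uniform and patching them across the discriminant locus, is the main obstacle, and it is exactly the point where the semianalytic machinery is indispensable.

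As an alternative route that avoids the explicit root bookkeeping, I would invoke Hironaka's resolution of singularities to obtain a proper real analytic map $\pi\colon\tilde{U}\to U$ for which $f\circ\pi$ is locally of normal-crossings type, a monomial times a nonvanishing factor. For such a local model the inequality is trivial, and the properness of $\pi$, together with an estimate comparing the pullback of $\dist(\cdot,\mathcal{Z}(f))$ to a monomial, transports the bound back down to $U$. The cost of this route is the heavy machinery it requires and precisely that comparison of the distance function under $\pi$, which carries the genuine technical content.
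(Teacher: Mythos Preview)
The paper does not prove this statement at all; it merely records \L ojasiewicz's inequality as a classical result, citing the textbook of Krantz and Parks \cite{KP02}, and then applies it as a black box in the proof of Lemma~\ref{product}. There is therefore no ``paper's proof'' against which to compare your proposal.

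As for your sketch itself: the two routes you outline---Weierstrass preparation plus induction on the dimension via the discriminant, and Hironaka's resolution followed by a normal-crossings model---are indeed the two standard strategies in the literature. Your localization step and reduction to a Weierstrass polynomial are fine. You are also candid about where the genuine difficulty sits: in the inductive route, the uniform control of the root collisions near $\mathcal{Z}(\Delta)$ and the passage from complex roots to the real zero set; in the resolution route, the comparison of the pulled-back distance function with a monomial. Neither of these is carried out in your proposal, and both require substantial semianalytic machinery (curve selection, Puiseux-type estimates, or the detailed geometry of the resolution), so what you have written is an accurate roadmap rather than a proof. If your aim is only to justify invoking the inequality, a citation is enough, and that is exactly what the paper does.
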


To prove our theorem we will essentially compare our polynomial $f$ to
polynomials which we know to be cyclic.

\begin{lem}\label{product}
  Assume $f \in \CC[z_1,z_2]$ has no zeros in $\DD^2$ and finitely many zeros on $\TT^2$.
  Then, for any positive integer $k$ there exist $g \in \CC[z_1]$ and
  $h \in \CC[z_2]$ with zeros only on $\TT$ such that $Q$ defined by
\[
Q(z_1,z_2) = \frac{g(z_1)h(z_2)}{f(z_1,z_2)}
\]
is $k$-times continuously differentiable on $\TT^2$.
\end{lem}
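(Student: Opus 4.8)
The plan is to build $g$ and $h$ so that their zeros on $\TT$ match those of $f$ with high enough multiplicity to cancel the vanishing of $f$, while $g(z_1)h(z_2)/f(z_1,z_2)$ stays bounded and smooth. First I would record the structural facts about $f$ supplied in Section \ref{polyremarks}: $f$ has no zeros on $\overline{\DD}^2 \setminus \TT^2$, and since $\mathcal Z(f)\cap\TT^2$ is finite, we may list these zeros as $(\zeta_1,\xi_1),\dots,(\zeta_N,\xi_N)$ with $\zeta_j,\xi_j\in\TT$. Set $g_0(z_1)=\prod_{j}(z_1-\zeta_j)$ and $h_0(z_2)=\prod_{j}(z_2-\xi_j)$, and then take $g=g_0^M$, $h=h_0^M$ for a large integer $M=M(k)$ to be chosen. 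By construction $g$ and $h$ have zeros only on $\TT$, so it remains to show $Q=g h/f$ extends $C^k$-smoothly across $\TT^2$; away from $\mathcal Z(f)\cap\TT^2$ this is clear since $f$ is non-vanishing there, so everything is local near each $(\zeta_j,\xi_j)$.

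The key estimate comes from \L ojasiewicz's inequality. Working near a fixed zero $p=(\zeta_j,\xi_j)$, identify a neighborhood in $\CC^2$ with a ball $E\subset\RR^4$ and apply \L ojasiewicz's inequality to the real analytic function $x\mapsto|f(x)|^2$ (or to $\re f, \imm f$): there are $C>0$ and $q\in\NN$ with
\[
|f(z)|\ \geq\ C\,\dist\big(z,\mathcal Z(f)\big)^{q}\ \geq\ C\,\dist\big(z,p\big)^{q}
\]
for $z$ in a neighborhood of $p$ in $\overline{\DD}^2$, where the second inequality uses that $p$ is an isolated point of $\mathcal Z(f)$ so that $\dist(z,\mathcal Z(f))=\dist(z,p)$ near $p$. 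On the other hand $|g(z_1)h(z_2)|=|g_0(z_1)|^M|h_0(z_2)|^M \lesssim \big(|z_1-\zeta_j|\,|z_2-\xi_j|\big)^M \lesssim \dist(z,p)^{2M}$ near $p$. Hence $|Q(z)|\lesssim \dist(z,p)^{2M-q}$, which tends to $0$ as $z\to p$ provided $2M>q$. To control derivatives, I would differentiate the identity $f\cdot Q = gh$: by the Cauchy--Leibniz formula, any partial derivative $\partial^\beta Q$ (in the real coordinates on $\TT^2$, of order $|\beta|\leq k$) can be expressed as a finite sum of terms of the form $f^{-1}\partial^{\gamma}(gh)\cdot(\text{products of }\partial^{\delta}f\text{ and powers of }f^{-1})$; inductively, $\partial^\beta Q$ is bounded near $p$ by a constant times $\dist(z,p)^{2M - (|\beta|+1)q'}$ for a suitable exponent $q'$ absorbing the derivatives of $f$ and the extra powers of $f^{-1}$. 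Choosing $M$ large enough that $2M > (k+1)q'$ for every $j$ makes all these exponents positive, so $\partial^\beta Q$ extends continuously by $0$-type bounds to $\TT^2$ near $p$; a standard lemma on $C^k$-extension (a function smooth off a point whose derivatives up to order $k$ extend continuously is $C^k$) finishes the local claim, and patching over the finitely many $p=(\zeta_j,\xi_j)$ completes the proof.

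The main obstacle is the bookkeeping in the derivative estimates: one must verify that differentiating $Q = gh/f$ repeatedly only introduces controlled negative powers of $f$ and derivatives of $f$, so that the \L ojasiewicz lower bound for $|f|$ and the explicit upper bound for $|gh|$ combine to give a net positive power of $\dist(z,p)$ after up to $k$ differentiations — and then choosing the single exponent $M$ uniformly over the finitely many boundary zeros. A secondary technical point is justifying that the resulting function, defined a priori only on $\overline{\DD}^2$ near each $p$, genuinely defines a $C^k$ function on the real manifold $\TT^2$; this is where having the estimates for \emph{all} real partial derivatives up to order $k$, not just holomorphic ones, matters, and it is why we phrased everything in terms of $\dist(z,p)$ in $\RR^4$ rather than in terms of $f$ alone.
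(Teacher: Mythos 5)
There is a genuine gap at the heart of your key estimate. You apply \L ojasiewicz's inequality in $\CC^2\cong\RR^4$ to get $|f(z)|\gtrsim \dist(z,\mathcal Z(f))^q$ and then claim $\dist(z,\mathcal Z(f))=\dist(z,p)$ near $p$ ``since $p$ is an isolated point of $\mathcal Z(f)$.'' But $p$ is only an isolated point of $\mathcal Z(f)\cap\TT^2$; the full zero set $\mathcal Z(f)\subset\CC^2$ is a complex algebraic curve and has no isolated points. The variety passes through $p$, exits the closed bidisk, and hugs the torus near $p$, so for $z\in\TT^2$ near $p$ the distance to $\mathcal Z(f)$ can be much smaller than $\dist(z,p)$. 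Concretely, for $f(z)=2-z_1-z_2$ and $p=(1,1)$, the point $z=(e^{i\theta},e^{-i\theta})$ satisfies $\dist(z,\mathcal Z(f))=O(\theta^2)$ while $\dist(z,p)\asymp|\theta|$, so your chain of inequalities does not deliver the lower bound $|f(z)|\gtrsim\dist(z,p)^q$ on $\TT^2$. That bound is in fact true, but it has to be obtained differently: the paper applies \L ojasiewicz to the restricted real-analytic function $r(x_1,x_2)=|f(e^{ix_1},e^{ix_2})|^2$ on $\RR^2$ (i.e.\ on the torus itself). There the zero set inside $[0,2\pi]^2$ genuinely is a finite set of points, so $\dist(x,\mathcal Z(r))$ is comparable to the distance to the nearest zero, and one gets $|f|^2\gtrsim\prod_j\bigl|(z_1-\zeta_j)(z_2-\xi_j)\bigr|^{q/2}$ on $\TT^2$, which is exactly the estimate your argument needs.

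The rest of your plan is in line with the paper's: take $g,h$ to be high powers of the products of $(z_1-\zeta_j)$ and $(z_2-\xi_j)$ over the finitely many torus zeros, and control the derivatives of $Q=gh/f$ on $\TT^2$ by Leibniz bookkeeping, noting that each derivative of $1/f$ costs only extra powers of $f^{-1}$ and derivatives of $f$, all of which are absorbed by making the vanishing order of the numerator large; the $C^k$-extension across the finitely many zero points is then the standard continuity-of-derivatives argument you cite. So the proposal is salvageable, but only after replacing the four-real-dimensional \L ojasiewicz step by the two-dimensional one on the torus (or by some other correct proof of the lower bound for $|f|$ on $\TT^2$ in terms of distance to the finite zero set).
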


\begin{proof}
  Let $r(x) = r(x_1,x_2) = |f(e^{ix_1}, e^{ix_2})|^2$, a function which is
  evidently real analytic throughout $\RR^2$, and let $\ZC(r)$ be the set
  of zeros of $r$.  Set $E = [0,2\pi]^2$.  By \L ojasiewicz's
  inequality there is a constant $C>0$ and positive integer $q$ so
  that
\[
r(x) \geq C \dist(x, \ZC(r))^q
\]
for $x \in E$.

By the assumption on $f$, $\ZC(r) \cap E$ is finite and thus there is
a constant $c>0$ so that for $x\in E$
\[
\dist(x,\ZC(r))^2  \geq c \prod_{y \in \ZC(r) \cap E} |x-y|^2.
\]
But $|x-y|^2 = |x_1-y_1|^2 + |x_2-y_2|^2\geq |e^{ix_1} -
e^{iy_1}|^2+|e^{ix_2} - e^{iy_2}|^2 \geq 2|(e^{ix_1} -
e^{iy_1})(e^{ix_2} - e^{iy_2})|$.  
Replacing $(e^{ix_1},e^{ix_2})$ by $(z_1,z_2)$ and $(e^{iy_1},e^{iy_2})$ by $(\zeta_1,\zeta_2),$ we get that
\[
\frac{\prod_{\zeta \in \ZC(f)\cap \TT^2}
  |(z_1-\zeta_1)(z_2-\zeta_2)|^{q/2}}{|f(z_1,z_2)|^2}
\]
is bounded on $\TT^2\setminus \ZC(f)$.  If we increase the power in
the numerator (say to $4q$) we get a function which is continuous on
$\TT^2$ and set equal to zero at the zeros of $f$.  Thus, for
\[
Q_0(z) = \prod_{\zeta \in \ZC(f)\cap \TT^2}
(z_1-\zeta_1)^q(z_2-\zeta_2)^q
\]
we have that $Q_0/f$ is bounded and continuous on $\TT^2$ (where again
we set the function equal to zero at zeros of $f$).  Therefore, for a
large enough power $N$, $Q_0^N/f$ is $k$-times continuously
differentiable on $\TT^2$. By construction, $Q_0^N(z_1,z_2) =
g(z_1)h(z_2)$ for some one variable polynomials $g,h$ which only
vanish on the circle.
\end{proof}

\begin{proof}[Proof of Theorem \ref{thmfinitezeros}]
  By Lemma \ref{product}, we have one variable polynomials $g,h$ which
  vanish only on the circle such that $Q(z_1,z_2) =
  g(z_1)h(z_2)/f(z_1,z_2)$ is $2$-times continuously differentiable on
  $\TT^2$.  Then, the Fourier coefficients of $Q$ satisfy
\[
\sum_{k,l} |\hat{Q}(k,l)|^2 (k+1)^2 (l+1)^2 <\infty
\]
which puts $Q \in \mathfrak{D}_{\alpha}$ for $\alpha \leq 2$.  Hence,
$g(z_1)h(z_2) \in f \mathfrak{D}_{\alpha}$ for $\alpha \leq 2$. Since $f$ is a multiplier, 
 $f \mathfrak{D}_{\alpha} = [f]$. 
But $g(z_1)h(z_2)$ is cyclic for $\alpha$ with $\alpha \leq 1$ (by \cite{BCLSS13II}), and therefore $f$ is cyclic for
$\alpha \leq 1$. \end{proof}

\section{Cyclicity for small parameter values}\label{smallalpha}
In this section, we prove that any polynomial that does not vanish in
the bidisk is cyclic for $\alpha\leq 1/2$, regardless of the size of
its zero set.  As was mentioned previously, the case $\alpha \leq 0$
follows from \cite{NGN}.  

\begin{thm}\label{smallalphathm}
Let $0< \alpha \leq 1/2$.
Then any polynomial that does not vanish in $\DD^2$ is cyclic in $\mathfrak{D}_{\alpha}$.
\end{thm}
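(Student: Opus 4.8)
The plan is to pass to the determinantal representation \eqref{detpolyformula} and then to manufacture, inside the invariant subspace $[f]$, a polynomial with only finitely many zeros on $\TT^2$, so that Theorem~\ref{thmfinitezeros} applies to it. Since a product of multipliers is cyclic if and only if each factor is (Section~\ref{funcspaceprops}), we may assume $f$ is irreducible. If $f$ depends on a single variable, cyclicity for every $\alpha\le1$ is contained in \cite{BCLSS13II}; if $\ZC(f)\cap\TT^2$ is finite, it is Theorem~\ref{thmfinitezeros}. So assume $f$ is irreducible, depends on both variables, has no zeros in $\DD^2$, and $\ZC(f)\cap\TT^2$ is infinite. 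After the harmless normalization $f(0)=1$, the discussion in Section~\ref{polyremarks} supplies \eqref{detpolyformula}: $f=\det[I-UZ]$ with $U$ an $(n+m)\times(n+m)$ unitary, $(n,m)$ the bidegree of $f$, and $Z=Z(z)$ the block-diagonal matrix in \eqref{detpolyformula}. The coefficient of $z_1^nz_2^m$ in $f$ equals $\pm\det U\ne0$, so $f$ has total degree exactly $n+m$.

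Fix any unit vector $v\in\CC^{n+m}$ and set $F(z)=\langle(I-UZ)^{-1}v,v\rangle$ and $P(z)=\langle(\operatorname{adj}[I-UZ])v,v\rangle$, so that $P=fF$ is a polynomial with $P(0)=1$; since every entry of $\operatorname{adj}[I-UZ]$ has total degree at most $n+m-1$, we have $\deg P<\deg f$. Writing $w=(I-UZ)^{-1}v$ and using $(I+UZ)(I-UZ)^{-1}=2(I-UZ)^{-1}-I$, one finds $2\operatorname{Re}F-1=\|w\|^2-\|UZw\|^2\ge0$ on $\DD^2$, so $\operatorname{Re}F\ge1/2$ there. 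Hence $P$ has no zeros in $\DD^2$, and none on $\TT^2\setminus\ZC(f)$ either: at such a point $F=P/f$ would be continuous with value $0$, contradicting $\operatorname{Re}F\ge1/2$. Since $f$ is irreducible and $\deg P<\deg f$ we have $f\nmid P$, so $P$ and $f$ share only finitely many zeros; consequently $\ZC(P)\cap\TT^2$ is finite, and Theorem~\ref{thmfinitezeros} shows that $P$ is cyclic in $\mathfrak{D}_\alpha$ for every $\alpha\le1$.

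It remains to prove $P\in[f]$, since then $\mathfrak{D}_\alpha=[P]\subseteq[f]$. Expand $F=\sum_{j\ge0}c_j$ with $c_j(z)=\langle(UZ)^jv,v\rangle$ homogeneous of degree $j$ (and $\|c_j\|_{L^\infty(\TT^2)}\le1$, since $(UZ)^j$ is unitary on $\TT^2$), let $p_N=\frac1{N+1}\sum_{k=0}^N\sum_{j=0}^kc_j$ be the Fej\'er means, and put $\mathbf g=(\operatorname{adj}[I-UZ])v$. Each $p_N$ is a polynomial, so $fp_N\in[f]$, and summing the Neumann series for $(I-UZ)^{-1}$ term by term gives $P-fp_N=\frac1{N+1}\sum_{k=0}^N\langle(UZ)^{k+1}\mathbf g,v\rangle$. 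The $k$-th summand is a polynomial whose monomials all have total degree in $[k+1,\,k+n+m]$, and whose $L^2(\TT^2)$-norm is bounded by $\sup_{\TT^2}\|\mathbf g\|$ uniformly in $k$, because $(UZ)^{k+1}$ is unitary on $\TT^2$. Monomials of distinct total degree are orthogonal in $\mathfrak{D}_\alpha$, so the summands in a fixed residue class of $k$ modulo $n+m$ are mutually orthogonal; combining Pythagoras with the elementary bound $\|q\|_\alpha\lesssim(\deg q)^\alpha\|q\|_{L^2(\TT^2)}$ on each summand yields
\[
\|P-fp_N\|_\alpha\;\lesssim\;\frac1{N+1}\Big(\sum_{k\le N}k^{2\alpha}\Big)^{1/2}\;\asymp\;N^{\alpha-1/2}.
\]
For $\alpha\le1/2$ this is bounded, and $fp_N\to P$ pointwise on $\DD^2$ (the remainder is $O((N+1)^{-1})$ uniformly on compact subsets). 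Since $\{fp_N\}$ is bounded in $\mathfrak{D}_\alpha$, lies in the weakly closed subspace $[f]$, and converges pointwise to $P$, we conclude $P\in[f]$, which proves the theorem.

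The whole argument turns on the last estimate, which is also where the value $\alpha=1/2$ comes from. Even though $F=1/f$ need not lie in $H^2(\DD^2)$, the determinantal representation forces its homogeneous parts $c_j$, and---more to the point---the error polynomials $\langle(UZ)^{k+1}\mathbf g,v\rangle$, to be uniformly bounded on $\TT^2$, while the grading by total degree makes the latter orthogonal in blocks; Fej\'er averaging then converts their per-term growth $k^\alpha$ into $N^{\alpha-1/2}$, which is bounded precisely at $\alpha=1/2$. Controlling this remainder appears to require exactly the unitarity and homogeneity packaged in \eqref{detpolyformula}, which is why the method is essentially two-variable.
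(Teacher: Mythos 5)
Your proof is correct, but it takes a genuinely different route from the paper's. The paper argues by duality: for $g\perp[f]$ it uses Proposition \ref{backgroundprop} --- the relation $(I-UZ)\vec{B}\in f\,\CC^{n+m}[z_1,z_2]$ together with the row vector $\vec{a}$ making $\vec{a}\vec{B}$ a one-variable polynomial zero-free on $\DD$ --- and then Bessel's inequality with the growth bound $\|\vec{v}A^{kd}\vec{B}\|_\al^2\lesssim(k+1)^{2\al}$ to force $\ip{\vec{v}\vec{B}}{g}_\al=0$, reducing everything to cyclicity of a one-variable polynomial. You instead work constructively from the determinantal identity \eqref{detpolyformula}: the positivity $\operatorname{Re}\langle(I-UZ)^{-1}v,v\rangle\ge 1/2$ on $\DD^2$ shows that your $P=\langle\operatorname{adj}(I-UZ)\,v,v\rangle=fF$ is zero-free in $\DD^2$ and nonzero on $\TT^2\setminus\ZC(f)$, and since $\deg P\le n+m-1<\deg f$ with $f$ irreducible, B\'ezout leaves $P$ with only finitely many zeros on $\TT^2$; Ces\`aro averaging of the Neumann series, using exactly the same three ingredients as the paper (unitarity of $UZ$ on $\TT^2$, orthogonality coming from the grading by total degree, and $\|q\|_\al\lesssim(\deg q)^{\al}\|q\|_{H^2}$), yields $\|P-fp_N\|_\al\lesssim N^{\al-1/2}$, whence $P\in[f]$ for $\al\le 1/2$ by the bounded-norm-plus-pointwise-convergence criterion, and cyclicity of $f$ follows from Theorem \ref{thmfinitezeros} applied to $P$. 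The trade-off: the paper needs the finer structural output of \cite{Kne09,Kne10} (the vector polynomial $\vec{B}$, the factorization \eqref{Pfactor}, and invertibility of the matrix there on $\DD$) but at the end invokes only one-variable cyclicity and never Theorem \ref{thmfinitezeros}; your argument treats \eqref{detpolyformula} as a black box (though it is itself proved by the same machinery, so the external dependence is comparable) and instead genuinely requires Section \ref{finitezeros}, which it promotes from a parallel case of the Main Theorem to an essential ingredient. In both proofs the threshold $\al=1/2$ enters identically, through $\frac{1}{N+1}\bigl(\sum_{k\le N}(k+1)^{2\al}\bigr)^{1/2}$ in your Fej\'er average versus the divergence of $\sum_k (k+1)^{-2\al}$ in the paper's Bessel argument.
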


The proof presented here relies heavily on the machinery developed in
\cite{Kne09, Kne10}.  As discussed in Section \ref{polyremarks}, we
can assume $f\in \CC[z_1,z_2]$ is
\begin{enumerate}
\item non-vanishing in $\DD^2$,
\item irreducible, 
\item a function of both variables, 
\item of bidegree $(n,m)$, and 
\item $f = \lambda \tilde{f}$ for some $\lambda \in \TT$.
\end{enumerate}
Here $\tilde{f}(z,w) = z_1^n z_2^m \overline{f(1/\bar{z}_1,
  1/\bar{z}_2)}$. The determinantal formula \eqref{detpolyformula} is
a consequence of the following key result which we explain at the end
of this section.

\begin{prop} \label{backgroundprop} Assume $f\in \CC[z_1,z_2]$
  satisfies (1)-(5) above.  Then, there exists an $(n+m)\times(n+m)$
  unitary $U$ and a column vector polynomial $\vec{B} \in
  \CC^{n+m}[z_1,z_2]$ such that
\[
\left(I-U \begin{pmatrix} z_1 I_n & 0 \\ 0 & z_2
  I_m \end{pmatrix}\right) \vec{B}(z) \in f(z)  \CC^{n+m}[z_1,z_2].
\]
Furthermore, there exists a row vector polynomial $\vec{a} \in
\CC^{n+m}[z_2]$ such that $p(z_2):= \vec{a}(z_2) \vec{B}(z_1,z_2)$ is a
one variable polynomial with no zeros on $\DD$.  
\end{prop}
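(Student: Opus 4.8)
The plan is to build $\vec B$ from the adjugate of the matrix appearing in the determinantal representation \eqref{detpolyformula}, and to extract $p$ from the block structure of the unitary $U$, using that compressions of a unitary are contractions. After multiplying $f$ by a unimodular constant and absorbing $c$, we may assume
\[
f(z)=\det M(z),\qquad M(z):=I_{n+m}-U\,E(z),\qquad E(z):=\begin{pmatrix}z_1 I_n&0\\0&z_2 I_m\end{pmatrix},
\]
with $U=\begin{pmatrix}U_{11}&U_{12}\\U_{21}&U_{22}\end{pmatrix}$ an $(n+m)\times(n+m)$ unitary in $n+m$ block form (this is \eqref{detpolyformula}; if one prefers not to invoke it directly, one proves it here alongside the rest, following \cite{Kne09,Kne10}). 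Let $\mathcal A(z):=\operatorname{adj}M(z)$ be the classical adjugate, so that $M(z)\mathcal A(z)=\mathcal A(z)M(z)=f(z)I_{n+m}$; since $E(z)$ is affine in $z_1$, every entry of $\mathcal A$ has degree $\le n$ in $z_1$.

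\emph{The vector $\vec B$.} For any polynomial column vector $\vec v\in\CC^{n+m}[z_2]$ set $\vec B(z):=\mathcal A(z)\vec v(z_2)$. Then
\[
\bigl(I_{n+m}-U\,E(z)\bigr)\vec B(z)=M(z)\mathcal A(z)\vec v(z_2)=f(z)\,\vec v(z_2)\in f(z)\,\CC^{n+m}[z_1,z_2],
\]
which is the first assertion (equivalently, at a smooth point of $\mathcal Z(f)$ where $f$ vanishes simply, the columns of $\mathcal A$ span the one-dimensional kernel of $I-UE(z)$, and \eqref{detpolyformula} itself says $\det(I-UE(z))$ vanishes exactly where $1$ is an eigenvalue of $UE(z)$). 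The remaining freedom in $\vec v$, and in the row vector $\vec a$, is what we spend on the second assertion.

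\emph{The polynomial $p$.} Setting $z_1=0$ gives the block upper-triangular matrix $M(0,z_2)=\begin{pmatrix}I_n&-z_2U_{12}\\0&I_m-z_2U_{22}\end{pmatrix}$, with $\det M(0,z_2)=\det(I_m-z_2U_{22})=f(0,z_2)$. Because $U_{22}$ is a compression of the unitary $U$ it is a contraction, so for $z_2\in\DD$
\[
\operatorname{Re}\bigl\langle(I_m-z_2U_{22})x,x\bigr\rangle=\|x\|^2-\operatorname{Re}\bigl(z_2\langle U_{22}x,x\rangle\bigr)\ge(1-|z_2|)\|x\|^2>0\qquad(x\neq0),
\]
i.e. $I_m-z_2U_{22}$ is strictly accretive on $\DD$, hence invertible there with accretive inverse. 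Consequently, for any fixed nonzero $\vec e\in\CC^m$ the scalar function $z_2\mapsto\langle(I_m-z_2U_{22})^{-1}\vec e,\vec e\rangle$ has strictly positive real part, hence no zeros, on $\DD$; multiplying by $f(0,z_2)$, which has no zeros in $\DD$ since $f$ is stable, produces the polynomial $\vec e^{*}\operatorname{adj}(I_m-z_2U_{22})\vec e=f(0,z_2)\langle(I_m-z_2U_{22})^{-1}\vec e,\vec e\rangle$, still with no zeros in $\DD$. It remains to realize a polynomial of this type as $\vec a(z_2)\vec B(z_1,z_2)$: writing $\vec B(z_1,z_2)=\sum_{k=0}^{n}z_1^k\vec B^{(k)}(z_2)$, we choose $\vec a(z_2)$ in the (generically $m$-dimensional) left annihilator of $\operatorname{span}\{\vec B^{(k)}(z_2):k\ge1\}$, which makes $p(z_2):=\vec a(z_2)\vec B(z_1,z_2)=\vec a(z_2)\vec B^{(0)}(z_2)$ independent of $z_1$, and then fit $\vec v$ and this $\vec a$ to the vector $\vec e$ so that $p$ becomes a nonzero multiple of $\vec e^{*}\operatorname{adj}(I_m-z_2U_{22})\vec e$. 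Carrying out this last matching is where the \emph{canonical} realization of \cite{Kne09,Kne10}, rather than an arbitrary determinantal representation, is used: its normalization endows $\vec B$ with a triangular dependence on $z_1$ for which a natural $z_1$-free functional supported on the $U_{22}$-block is available.

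\emph{The main obstacle.} The genuinely delicate point is the last one. Annihilating the $z_1$-dependent part of $\vec B$ by a choice of $\vec a(z_2)$ is cheap linear algebra, and producing \emph{some} polynomial in $z_2$ with no zeros in $\DD$ is cheap via the accretivity estimate; the difficulty is to exhibit a single $\vec a(z_2)$ that does both at once, which requires knowing the precise form of $\vec B$ furnished by the realization of \cite{Kne09,Kne10}. A secondary difficulty, if one does not quote \eqref{detpolyformula}, is verifying that this realization has state space of dimension exactly $n+m$ — which is precisely the place where the hypothesis $f=\lambda\tilde f$ enters, via the degree bookkeeping of \cite{Kne10}.
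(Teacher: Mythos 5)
Your first conclusion is fine once you grant a unitary determinantal representation: with $M(z)=I-UE(z)$ and $\vec B=\operatorname{adj}(M)\vec v$, the identity $M\operatorname{adj}(M)=fI$ immediately puts $(I-UE)\vec B$ in $f\,\CC^{n+m}[z_1,z_2]$. (Be aware, though, that in this paper \eqref{detpolyformula} is presented as a \emph{consequence} of the proposition, so quoting it wholesale is slightly backwards; the representation does exist independently in the literature, so this is an ordering issue rather than a fatal circularity.) The genuine gap is the second conclusion, and you acknowledge it yourself: you never produce a row vector $\vec a(z_2)$ for which $p=\vec a\vec B$ is simultaneously independent of $z_1$ \emph{and} zero-free on $\DD$. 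Choosing $\vec a$ in the left annihilator of the higher $z_1$-coefficients of $\vec B$ (after clearing denominators to make it polynomial) gives no control whatsoever on the zeros of $\vec a\vec B^{(0)}$ in $\DD$ — it could even vanish identically — and the accretivity argument only shows that the unrelated polynomial $\vec e^{*}\operatorname{adj}(I_m-z_2U_{22})\vec e$ is zero-free; the asserted ``matching'' of the two constructions is exactly the step you defer to ``the precise form of $\vec B$ furnished by the realization of \cite{Kne09,Kne10}'' and is never carried out. Since Theorem \ref{smallalphathm} uses $p$ to reduce cyclicity of $f$ to cyclicity of a one-variable zero-free polynomial, this missing step is the heart of the proposition, not a finishing detail.

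For comparison, the paper does not start from $\operatorname{adj}(I-UE)$ at all. It sets $h=z_1\partial_{z_1}f+z_2\partial_{z_2}f$, uses $h+\tilde h=(n+m)f$ and the results of \cite{Kne09} to see that $\tilde h$ is stable, and then invokes the main theorem of \cite{Kne10} to get the Agler-type decomposition \eqref{Agdecomp} with vector polynomials $\vec P,\vec Q$. The unitary $U$ is built from the isometry $(z_1\vec P,z_2\vec Q)\mapsto(\vec P,\vec Q)$ on $\ZC(f)$, and $\vec B=\begin{pmatrix}\vec P\\ \vec Q\end{pmatrix}$; irreducibility of $f$ gives the ideal membership. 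The decisive extra structure, which your proposal lacks, is \eqref{Pfactor}: $\vec P(z)=P(z_2)(1,z_1,\dots,z_1^{n-1})^{T}$ with $P(z_2)$ invertible for $z_2\in\DD$. With that in hand, $\vec a(z_2)=(e_1\operatorname{adj}(P(z_2)),\,0_m)$ yields $p(z_2)=\det P(z_2)$, which is a one-variable polynomial without zeros in $\DD$. So the ``canonical realization'' you allude to is precisely this sums-of-squares construction, and the proposition cannot be recovered from an arbitrary unitary determinantal representation by the linear-algebra and accretivity observations alone.
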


Armed with this proposition, the proof of cyclicity is
straightforward.

\begin{proof}[Proof of Theorem~\ref{smallalphathm}]
Assume $f,\vec{B}, U$ are as in
  Proposition \ref{backgroundprop} and let $\alpha \leq 1/2$.  Suppose
  $g \in \Dal$ is orthogonal to $[f]$ in $\Dal$.  We shall show first that for every row
  vector polynomial $\vec{v} \in \CC^{n+m}[z_1,z_2]$
\[
\ip{\vec{v} \vec{B} }{g}_{\al} = 0.
\]

For simplicity, set 
\[A(z) = U \begin{pmatrix} z_1 I_n & 0 \\ 0 & z_2
  I_m \end{pmatrix} \]
The important thing to notice about $A$ is that it is homogeneous of degree $1$.

Since $(I-A(z)) \vec{B}(z) \in f \CC^{n+m}[z_1,z_2]$ it follows that
$(I-A(z)^k) \vec{B}(z) \in f \CC^{n+m}[z_1,z_2]$ for any $k \geq 1$.
So, if $g \perp [f]$ in $\Dal$, then
\begin{equation} \label{recur}
\ip{\vec{v} \vec{B}}{g}_\al = \ip{\vec{v} A^k \vec{B}}{g}_{\al}
\end{equation}
for any $k\geq 0$.  

Let $d = \deg \vec{v} + \deg \vec{B} + 1$ where the degree of a vector
polynomial is the maximum of the degrees of each entry. We
emphasize we are using total degree and not bidegree.  Then, the
monomials appearing in $\vec{v} A^k \vec{B}$ have total degree between $k$
and $k+d-1$.  So, $\{\vec{v} A^{kd} \vec{B} \}$ forms a sequence of pairwise
orthogonal elements of $\Dal$ since the degrees of the monomials in $\vec{v}
A^{kd} \vec{B}$ lie in the interval $[kd, (k+1)d)$.

By Bessel's inequality
\[
\|g\|^2_{\al} \geq \sum_{k\geq 0} \frac{|\ip{\vec{v} A^{kd}
    \vec{B}}{g}_{\al}|^2}{\|\vec{v} A^{kd} \vec{B}\|^2_{\al}} 
= \sum_{k\geq 0} \frac{|\ip{\vec{v}\vec{B}}{g}_{\al}|^2}{\|\vec{v} A^{kd} \vec{B}\|^2_{\al}}. 
\]
Notice if one of the denominators happened to be zero, then our goal
$\ip{\vec{v} \vec{B}}{g}_{\al} = 0$ holds automatically by \eqref{recur}.
So, we can assume $\vec{v} A^k \vec{B} \ne 0$ for all $k \geq 0$ and the
goal now is to show the denominator grows too slowly for the above sum
to be finite without having $\ip{\vec{v} \vec{B}}{g}_{\al} = 0$.

Since $\vec{v} A^{kd} \vec{B}$ has degree at most $(k+1)d$, 
\[
\begin{aligned}
\|\vec{v} A^{kd} \vec{B}\|^2_{\al} &\leq \max\{ (i+1)^{\al}(j+1)^{\al}: i+j
\leq (k+1)d \} \|\vec{v} A^{kd}\vec{B} \|^2_{H^2} \\
& \leq ((k+1)d/2+1)^{2\al} \|\vec{v}
A^{kd} \vec{B} \|^2_{H^2}. 
\end{aligned}
\]
But $\|\vec{v} A^{kd} \vec{B}\|_{H^2} \leq \|\vec{v}\|_{H^{\infty}}
\|\vec{B}\|_{H^2}$; since $A$ is unitary valued on $\TT^2$ we have
$\|A^{kd}\|_{H^{\infty}} = 1$ for all $k$.  We are implicitly using
vector-valued $H^2$ and matrix valued $H^\infty$ as well as the fact
that $H^\infty$ is the multiplier algebra of $H^2$.  Therefore,
\[
\|\vec{v} A^{kd} \vec{B}\|^2_{\al} \leq C (k+1)^{2\al}
\]
for some constant $C$ depending on $d,\vec{v},\vec{B}$.  
Thus,
\[
\sum_{k\geq 0} \frac{|\ip{\vec{v}\vec{B}}{g}_{\al}|^2}{\|\vec{v} A^{kd}
  \vec{B}\|^2_{\al}} \geq \sum_{k\geq 0}
\frac{|\ip{\vec{v}\vec{B}}{g}_{\al}|^2}{ C(k+1)^{2\al}}.
\]
The only way this can be finite when $\al \leq 1/2$ is if
$\ip{\vec{v}\vec{B}}{g}_{\al} = 0$.

Now we can finish the proof.
Setting $\vec{v}(z) = z_1^j z_2^k \vec{a}(z_2)$ where $\vec{a}$
is as in Proposition \ref{backgroundprop}, we see that for all
$j,k\geq 0$
\[
0 = \ip{z_1^j z_2^k \vec{a} \vec{B}}{g}_{\al} = \ip{z_1^j z_2^k
  p}{g}_{\al}
\]
where $p=\vec{a}\vec{B} \in \CC[z_2]$ from Proposition \ref{backgroundprop} has no
zeros in $\DD$.  Thus, $g$ is orthogonal to $[p]$, but since since $p$
is cyclic for $\al \leq 1$ we have $[p]=\Dal$.  Hence, $g =0$ and we
conclude $f$ is cyclic.
\end{proof}

We now turn to explaining Proposition \ref{backgroundprop}.  This
result can be essentially proven by quoting results from
\cite{Kne09} but some further explanation is necessary.  

First, we can assume $f = \tilde{f}$ by replacing $f$ with a
unimodular multiple.  The condition $f=\tilde{f}$ is what is termed
``$\TT^2$-symmetric'' in \cite{Kne09}.  Many of the results we use
from \cite{Kne09} require this of $f$.  By Theorem 2.9 of
\cite{Kne09}, if we set
\[
h =z_1 \frac{\partial f}{\partial z_1}+
z_2 \frac{\partial f}{\partial z_2}
\]
then $\tilde{h}$ has no zeros in $\DD^2$.  Furthermore, by computation
or by Lemma 2.7 of \cite{Kne09}, $h + \tilde{h} = (n+m)f$.  Since $f$
is irreducible, $h$ and $\tilde{h}$ have no factors in common unless
they are multiples of one another by looking at degrees.  This
possibility is disallowed by $h(0,0)=0$.  Thus, $\tilde{h}$ has no
zeros in $\DD^2$ and no factors in common with $h$.

By the main result of \cite{Kne10}, there exist vector polynomials
$\vec{P}\in \CC^{n}[z_1,z_2]$ and $\vec{Q} \in \CC^{m}[z_1,z_2]$ such
that
\begin{equation} \label{Agdecomp}
\tilde{h}(z) \overline{\tilde{h}(w)} - h(z) \overline{h(w)} =
(1-z_1\bar{w}_1) \vec{P}(w)^* \vec{P}(z) + (1-z_2\bar{w}_2)
\vec{Q}(w)^* \vec{Q}(z).
\end{equation}
Furthermore, we may choose $\vec{P}$ to be of degree $n-1$ in $z_1$
and when we write
\begin{equation} \label{Pfactor}
\vec{P}(z_1,z_2) = P(z_2) \begin{pmatrix} 1 \\ z_1 \\ \vdots \\
  z_1^{n-1} \end{pmatrix} 
\end{equation}
the $n\times n$ matrix $P(z_2)$ is invertible for $z_2 \in \DD$.
Lemma 2.8 of \cite{Kne09}, or a computation, shows the left hand side
of \eqref{Agdecomp} is equal to
\[
(n+m)^2 f(z) \overline{f(w)} - (n+m)( h(z) \overline{f(w)} + f(z)
\overline{h(w)})
\]
---the main point here is that this vanishes on $\ZC(f)$.  Thus,
for $z,w \in \ZC(f)$ equation \eqref{Agdecomp} rearranges into
\[
 \vec{P}(w)^* \vec{P}(z) + \vec{Q}(w)^* \vec{Q}(z) = z_1\bar{w}_1 \vec{P}(w)^* \vec{P}(z) + z_2\bar{w}_2
\vec{Q}(w)^* \vec{Q}(z)
\]
and so the map 
\[
\begin{pmatrix} z_1\vec{P}(z) \\ z_2 \vec{Q}(z) \end{pmatrix}
\mapsto \begin{pmatrix} \vec{P}(z) \\ \vec{Q}(z) \end{pmatrix}
\]
defined for $z \in \ZC(f)$ extends linearly to an isometry which in
turn can be extended to a mapping given by an $(n+m)\times (n+m)$
unitary matrix $U$:
\[
U \begin{pmatrix} z_1\vec{P}(z) \\ z_2 \vec{Q}(z) \end{pmatrix}
= \begin{pmatrix} \vec{P}(z) \\ \vec{Q}(z) \end{pmatrix}
\]
for $z \in \ZC(f)$. Thus, if we set $\vec{B} = \begin{pmatrix} \vec{P}
  \\ \vec{Q} \end{pmatrix}$, then
\begin{equation} \label{Ueq}
\left(I-U \begin{pmatrix} z_1 I_n & 0 \\ 0 & z_2 I_m \end{pmatrix}
\right) \vec{B}(z)
\end{equation}
vanishes on $\ZC(f)$ and since $f$ is irreducible its entries must
belong to the polynomial ideal generated by $f$.  Therefore,
\eqref{Ueq} belongs to $f \CC^{n+m}[z_1,z_2]$.  

To finish we must find a row vector polynomial $\vec{a} \in
\CC^{n+m}[z_2]$ such that $\vec{a}(z_2) \vec{B}(z_1,z_2)$ is a one variable
polynomial in $z_2$ with no zeros in $\DD$.  Set $\vec{a}(z_2) = (e_1
\text{adj}(P(z_2)), 0_m)$ where $e_1 = (1,0,\dots,0) \in \CC^n$ and
$0_m \in \CC^m$ is the zero (row) vector.  Here $\text{adj}$ denotes
the adjugate matrix or the transpose of the matrix of cofactors.
Then, by \eqref{Pfactor}
\[
\vec{a}(z_2)\vec{B}(z) = e_1 \text{adj}(P(z_2)) \vec{P}(z) = e_1
\text{adj}(P(z_2)) P(z_2) \begin{pmatrix} 1 \\ z_1 \\ \vdots \\
  z_1^{n-1} \end{pmatrix} = \det(P(z_2))
\]
which has no zeros in $\DD$.

\section{Non-cyclicity and curves in the distinguished boundary}\label{Curves}
We now turn to the remaining case where $\alpha>1/2$ and $f$ is a
polynomial whose zero set meets the distinguished boundary along
curves. Here, we find that local curvature properties determine the
parameters $\alpha$ for which the function is not cyclic in
$\mathfrak{D}_\alpha$. Our methods are analytic in nature, and do not
rely on the algebraic properties of polynomials to the same extent as
in previous sections.  In particular, the next theorem holds for a
general $f \in \Dal$ which is not necessarily a polynomial.  Recall
that for $\al \geq 0$, any $f \in \Dal$ has non-tangential boundary
values on $\TT^2$ quasi-everywhere with respect to $\al$-capacity and
thus the zero set $\ZC(f) \cap \TT^2$ of $f$ restricted to $\TT^2$ is
well-defined up to sets of $\al$-capacity zero; see \cite{Kap94}.

\begin{thm}\label{t-result}
Assume that $f\in\mathfrak{D}_\alpha$ is such that the intersection $\mathcal{Z}(f) \cap\mathbb{T}^2$ contains a locally smooth curve $S$ of type $\tau$. Then $f$ is not cyclic in $\mathfrak{D}_\alpha$ for any $\alpha>1-1/\tau$.
\end{thm}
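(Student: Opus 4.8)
The plan is to deduce non-cyclicity from the $\alpha$-capacity criterion of \cite[Section 4]{BCLSS13II} recalled in Section \ref{alphacapa}: if $\mathrm{cap}_{\beta}(\ZC(f)\cap\TT^2)>0$ for some $\beta\in(0,1)$, then $f$ fails to be cyclic in $\mathfrak{D}_\alpha$ for every $\alpha\ge\beta$. So fix $\alpha>1-1/\tau$ and choose $\beta$ with $1-1/\tau<\beta<1$ and $\beta\le\alpha$; such a $\beta$ exists because $1-1/\tau<1$, so we may take $\beta=\alpha$ when $\alpha<1$ and any $\beta\in(1-1/\tau,1)$ when $\alpha\ge1$. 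Since capacity is monotone under inclusion, it suffices to produce a probability measure of finite Riesz $\beta$-energy concentrated on a piece of the curve $S$.

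First I would localize. Pick a relatively open subarc $S'\subset S$ lying inside a single coordinate chart of $\TT^2$, with a smooth parametrization $\phi\colon I\to\TT^2$ as in Section \ref{ss-oscillatory}; since every point of $S$ has type at most $\tau$, so does $S'$. Choose $\psi\in C_0^\infty(S')$ non-negative and not identically zero, and let $d\mu=c\,\psi\,d\sigma$, where $\sigma$ is the pull-back of Lebesgue measure under $\phi$ and $c>0$ is chosen so that $\mu$ is a probability measure. Then $\mu$ has the form \eqref{smoothmeasure}, so Theorem \ref{t-oscillatory} applies and produces a constant $C$ with $|\hat\mu(k,l)|\le C(k^2+l^2)^{-1/(2\tau)}$ for all $k,l\in\ZZ$.

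Next I would estimate the $\beta$-energy of $\mu$ through the Fourier-coefficient formula \eqref{logenergycoeffs}. The two single sums there are dominated by $C^2\sum_{k\ge1}k^{-2/\tau-\beta}$, which converges because $\beta>1-1/\tau>1-2/\tau$. For the double sum, split the exponent using $k^2+l^2\ge 2|k|\,|l|$, so that $(k^2+l^2)^{-1/\tau}\le 2^{-1/\tau}|k|^{-1/\tau}|l|^{-1/\tau}$, whence
\[
\sum_{k\ne0}\sum_{l\ge1}\frac{|\hat\mu(k,l)|^2}{|k|^\beta l^\beta}\ \le\ 2^{-1/\tau}C^2\Big(\sum_{k\ne0}|k|^{-\beta-1/\tau}\Big)\Big(\sum_{l\ge1}l^{-\beta-1/\tau}\Big),
\]
and both factors on the right are finite precisely because $\beta+1/\tau>1$. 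Therefore $I_\beta[\mu]<\infty$, so $\mathrm{cap}_\beta(S')>0$ and, by inclusion, $\mathrm{cap}_\beta(\ZC(f)\cap\TT^2)>0$.

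Finally, since $\mu$ has finite $\beta$-energy it charges no set of $\beta$-capacity zero; as the boundary values of $f\in\mathfrak{D}_\alpha$ vanish on $S'$ off a set of $\alpha$-capacity (hence, since $\beta\le\alpha$, of $\beta$-capacity) zero, $\mu$ is concentrated on $\ZC(f)\cap\TT^2$, and the criterion of \cite{BCLSS13II} then shows that $f$ is not cyclic in $\mathfrak{D}_\alpha$. The only genuine computation is the energy estimate, and its content is that the double sum converges exactly when $\beta>1-1/\tau$, so the threshold in the statement is the natural one for this method; everything else---the van der Corput-type decay bound and the duality underlying the capacity criterion---is already in place from the preliminaries. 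I expect the main care to be needed in the bookkeeping around the localization and around the fact that, for non-polynomial $f$, the zero set on $\TT^2$ is defined only up to sets of $\alpha$-capacity zero.
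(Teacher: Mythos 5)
Your proposal is correct and takes essentially the same route as the paper: the capacity criterion from \cite{BCLSS13II}, a probability measure of the form \eqref{smoothmeasure} on a piece of $S$, the decay estimate of Theorem \ref{t-oscillatory}, and the Fourier energy formula \eqref{logenergycoeffs}, with monotonicity of capacity closing the argument. The only difference is cosmetic: you bound the double sum via $k^2+l^2\ge 2|k|\,l$, splitting it into a product of two one-variable series, whereas the paper sums over $l<k$ and uses $\sum_{l<k}(l+1)^{-\alpha}\lesssim k^{1-\alpha}$; both yield the threshold $\alpha>1-1/\tau$, and your use of an auxiliary $\beta<1$ to cover $\alpha\ge 1$ is consistent with how the capacity criterion is stated.
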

\begin{proof}
Since $S\subset \mathcal{Z}(f)\cap\mathbb{T}^2$, we have $\alphacap(S)\le \alphacap(\mathcal{Z}(f)\cap\mathbb{T}^2)$.
Our goal is to show that $\alphacap(S)>0$ for $\alpha>1-1/\tau$ by proving the existence of a measure $\mu$ of the form \eqref{smoothmeasure} that has finite energy. This is accomplished by estimating the Fourier coefficients $\hat\mu(k,l)$ of such a measure, and plugging these estimates into the Fourier formula for the energy given in \eqref{logenergycoeffs}:
\begin{equation*}
I_{\alpha}[\mu]\asymp 1+\sum_{k=1}^{\infty}\frac{|\hat{\mu}(k,0)|^2}{k^{\alpha}}
+\sum_{l=1}^{\infty}\frac{|\hat{\mu}(0,l)|^2}{l^{\alpha}}+\frac{1}{2}\sum_{k \in \mathbb{Z}\setminus \{0\}}
\sum_{l=1}^{\infty}\frac{|\hat{\mu}(k,l)|^2}{|k|^{\alpha} l^{\alpha}}.
\end{equation*}

Applying the estimate in Theorem \ref{t-oscillatory} to the sum of diagonal terms ($k=l\ge 1$) we obtain
\begin{align*}
\sum_{k=1}^\infty \frac{|\hat\mu(k,k)|^2}{(k+1)^{2\alpha}}
&\le C
\sum_{k=1}^\infty \frac{1}{(2k^2)^{1/\tau}(k+1)^{2\alpha}}
\le C
\sum_{k=1}^\infty \frac{1}{k^{2\alpha+2/\tau}}.
\end{align*}
The latter series converges when $\alpha> 1/2 - 1/\tau$. Moreover, we have
\[\sum_{k=1}^{\infty}\frac{|\hat{\mu}(k,0)|^2}{(k+1)^{\alpha}}\leq C\sum_{k=1}^{\infty}\frac{1}{k^{\alpha+1/\tau}},\]
and similarly for the series involving $\hat{\mu}(0,l)$, and having $\alpha>1-1/\tau$ forces convergence.

We perform a similar computation for the remaining terms. We now have to sum over both $k$ and $l$; however, using the symmetry in Theorem \ref{t-oscillatory}, we obtain
\begin{align*}
&\sum_{k=2}^\infty\sum_{l=1}^{k-1} \frac{|\hat\mu(k,l)|^2}{(k+1)^{\alpha}(l+1)^{\alpha}}
+\sum_{l=2}^\infty\sum_{k=1}^{l-1} \frac{|\hat\mu(k,l)|^2}{(k+1)^{\alpha}(l+1)^{\alpha}}\\
\le
&\,\,C
\sum_{k=2}^\infty\sum_{l=1}^{k-1} \frac{1}{(k+l)^{2/\tau}(k+1)^{\alpha}(l+1)^{\alpha}}\\
\le
&\,\,C
\sum_{k=2}^\infty \frac{1}{k^{2/\tau}(k+1)^{\alpha}}\sum_{l=1}^{k-1} \frac{1}{(1+l/k)^{2/\tau}(l+1)^{\alpha}}\\
\le
&\,\,C
\sum_{k=2}^\infty \frac{1}{k^{\alpha+2/\tau}}\sum_{l=1}^{k-1} \frac{1}{(l+1)^{\alpha}}\\
\le
&\,\,C
\sum_{k=1}^\infty \frac{1}{k^{\alpha+2/\tau}}\cdot k^{1-\alpha}\qquad\qquad\qquad\quad(\text{since }1-\alpha>0)\\
\le
&\,\,C
\sum_{k=1}^\infty \frac{1}{k^{2\alpha-1+2/\tau}}\,.
\end{align*}
The latter sum is finite if and only if $2\alpha-1+2/\tau>1$, which happens precisely if $\alpha>1-1/\tau$.

In conclusion, the total energy of $\mu$ is finite if
$
\alpha>
1-1/\tau$, and non-cyclicity of $f$ follows.
\end{proof}
We illustrate the content of the theorem, and shed new light on the results obtained in \cite{BCLSS13II}, by looking at a family of examples.
\begin{ex}
For $a, b \in \CC$ with $|a|^2+|b|^2=1$, we consider the unitary matrix
\[ U=\left(\begin{array}{cc}a & -\overline{b}\\ b& \overline{a}\end{array}\right).\]
Taking $n=2$ and $j=k=1$ in the formula \eqref{detpolyformula}, and plugging in this choice of $U$, we obtain the family of polynomials
\begin{equation}
f_{a}(z_1,z_2)=1-az_1-\overline{a}z_2+z_1z_2.
\label{apolyfamily}
\end{equation}

Setting $a=0$, we obtain $f_0(z_1,z_2)=1+z_1z_2$, an irreducible
polynomial whose zero set and cyclic properties were studied in
\cite[Section 4]{BCLSS13II}. In particular, it was established that
$f_0$ is cyclic in $\mathfrak{D}_{\alpha}$ precisely when $\alpha\leq
1/2$.  The choice $a=1$ leads to the polynomial
$f_1(z_1,z_2)=1-z_1-z_2+z_1z_2=(1-z_1)(1-z_2)$, which factors into
one-variable polynomials; such polynomials were shown to be cyclic for
all $\alpha\leq 1$ in \cite[Section 2]{BCLSS13II}.

Solving the equation $f_a(z_1,z_2)=0$
for $z_2$, we obtain
\[z_2=\frac{az_1-1}{z_1-\bar{a}}.\] Direct computation (or the
observation that the formula for $1/z_2$ describes a M\"obius
transformation) shows that $|z_2|=1$ when $|z_1|=1$.  This leads to a
parametric representation of $\mathcal{Z}(f_a)\cap \mathbb{T}^2$:
\begin{equation}
t\mapsto (e^{it}, e^{i m(t)}),\quad
\textrm{where} \quad
m(t)=\arg\left(\frac{ae^{it}-1}{e^{it}-\overline{a}}\right),
\label{moebzeros}
\end{equation}
or, if the torus is identified with $[0,2\pi)\times [0, 2\pi)$,
$t\mapsto (t, m(t))$.  Restricting to $a \in (0,1)$ for simplicity, we
see that $\mathcal{Z}(f_a)$ ``interpolates'' between
\[
\mathcal{Z}(f_0)=\{(e^{it}, -e^{-it})\} \quad \textrm{and}\quad
\mathcal{Z}(f_1)=(\mathbb{T}\times \{1\})\cup (\{1\}\times \mathbb{T}). \]

In \cite{BCLSS13II}, it was shown that $\mathcal{Z}(f_0)\cap \mathbb{T}^2$ has positive $1/2$-capacity, while $\mathcal{Z}(f_1)\cap\mathbb{T}^2$ has $\alpha$-capacity $0$ for all $\alpha\leq 1$. We will now show that the $\alpha$-capacity of $\mathcal{Z}(f_a)\cap \mathbb{T}^2$ for intermediate $a\in (0,1)$ is positive for all  
$1/2<\alpha \le 1$, so that $f_a$ is not cyclic in $\mathfrak{D}_{\alpha}$ for this range of parameter values.

We show that $\mathcal{Z}(f_a)$ is of type $\tau=2$ at $ t=\pi/2$ when $a\in (0,1)$.
Computations yield
\begin{align*}
m(  t) &= \pi+\arctan\left(\frac{(1-a^2)\sin  t}{2a-(1+a^2)\cos  t}\right), \\
m'(  t) &= \frac{1-a^2}{2a \cos  t -1-a^2},\\
\textrm{and} \,\,\, m''(  t) &= \frac{2a(1-a^2)\sin  t}{(2a \cos  t -1-a^2)^2}\,.
\end{align*}
If  $\eta = (\eta_1,\eta_2)^\top\in \mathbb{R}^2$ is a unit vector such that
\[\left[\frac{d}{dt}(t,m(t))\right]_{t=\pi/2} \cdot \eta= \eta_1  -\frac{1-a^2}{1+a^2}\,\eta_2 = 0, \]
then $\eta$ is determined up to sign. If $a\neq 0,1$, then we cannot simultaneously have
\[\left[\frac{d^2}{dt^2}(t,m(t))\right]_{t=\pi/2} \cdot \eta= -\frac{2a(1-a^2)}{(1+a^2)^2} \,\eta_2 =0,\]
and hence $\mathcal{Z}(f_a)$ is of type $2$ at this point.

By smoothness of $\mathcal{Z}(f_a)$, for fixed $a$, there exists
$\varepsilon>0$ such that $S=\phi([\pi/2-\epsilon, \pi/2+\epsilon])$
is of type $2$. (A computation reveals that $\mathcal{Z}(f_a)$ is
actually of type $3$ at $(1,1)$.)  By virtue of Theorem
\ref{t-oscillatory}, we now have $|\hat{\mu}(k,l)|\le C
(k^2+l^2)^{-1/4}$ for measures supported on the zero set in the torus,
so that the right hand side of \eqref{logenergycoeffs} converges for
$\alpha>1/2$, as in the proof of Theorem \ref{t-result}.  In
conclusion, $f_a$ is not cyclic in $\mathfrak{D}_\alpha$ for all
$\alpha>1/2$.

To address the remaining range $\alpha\in (0,1/2)$, we recall that $f(z_1,z_2)=1+z_1z_2$ was proved to be cyclic in $\mathfrak{D}_{\alpha}$ when $\alpha\leq 1/2$ using restriction arguments in \cite{BCLSS13II}. We now
note that
\[1-az_1-\bar{a} z_2+z_1z_2=(1-az_1)\cdot f\circ m_{a,0} (z_1,z_2),\]
where $m_{a,0}$ denotes a M\"obius automorphism of the bidisk of the
form \eqref{mobmap}. This means that $f_a$ is cyclic if $f\circ
m_{a,0}$ is. But since the $\mathfrak{D}_{\alpha}$-norm of a function
precomposed with a M\"obius map is comparable to the norm of the
function itself (see Section \ref{funcspaceprops}), the cyclicity of $f$ implies that $f_a$ is cyclic in $\mathfrak{D}_{\alpha}$ for $\alpha\leq 1/2$. This is because the approximating polynomials for $f$ transform into rational functions with no poles in the closed bidisk, and such functions can be approximated in multiplier norm by polynomials. An analogous argument shows that $f_a$ is cyclic for $\alpha\leq 1/2$; this of course is in agreement with the result in Section \ref{smallalpha}.

In the case where $a=0$ or $a=1$, the zero set can be viewed as straight lines in the torus: a local parametric
representation can be given as
\[t\mapsto t(p,q), \quad t\in [0,2\pi),\]
where $p,q\in \mathbb{Q}$.
(We have $(p,q)=(1,-1)$ in the case $a=0$, while $(p,q)=(1,0)$ and $(0,1)$ for $a=1$.)
In particular, both $\mathcal{Z}(f_0)$ and $\mathcal{Z}(f_1)$
are of infinite type. In this case, we consider the measures
induced by the integration current associated with $\mathcal{Z}(f_a)\cap \mathbb{T}^2$, as in \cite[Section 4.2]{BCLSS13II}, and note that their Fourier coefficients are constant along indices $(k,l)$ that satisfy
\[(k,l)\cdot (p,q)=kp+lq=0,\]
and equal to zero otherwise. Depending on whether both $p$ and $q$ are
non-zero or not, the series in \eqref{logenergycoeffs} will then
converge for $\alpha>1/2$ or $\alpha>1$. While the local curvatures of
the zero sets are the same in these cases, the cyclicity properties of
the polynomials differ because the norm in $\mathfrak{D}_{\alpha}$ is
of product type, and this is reflected in the fact that measures
supported on lines aligned along the coordinate axes are assigned
greater energy. $\hfill\blacklozenge$ 
\end{ex} 
Returning to polynomials, we note that a component of $\mathcal{Z}(f)\cap \mathbb{T}^2$ aligns with one of the coordinate axes precisely when $f$ contains a factor that is a polynomial in one variable only. If $f$ is a polynomial in one variable,
then $f$ is cyclic in $\mathfrak{D}_{\alpha}$ for all $\alpha \leq 1$.

Bearing this in mind, we formulate our main result for polynomials that vanish along curves.
\begin{thm}\label{bigalphathm}
Let $f\in \mathbb{C}[z_1,z_2]$ have no zeros in $\mathbb{D}^2$, and suppose $\mathcal{Z}(f)$ meets $\mathbb{T}^2$ along a curve. If $f$ is not a polynomial in one variable only, then $f$ is not cyclic in
$\mathfrak{D}_{\alpha}$ for $\alpha>1/2$.
\end{thm}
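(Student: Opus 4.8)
The plan is to route everything through Theorem~\ref{t-result}: a locally smooth arc of type $2$ sitting inside $\mathcal{Z}(f)\cap\TT^2$ forces non-cyclicity for all $\alpha>1-1/2=1/2$, so the whole problem is to produce such an arc, after a M\"obius change of variables if necessary. First I would reduce. Since a polynomial is cyclic if and only if each of its irreducible factors is (Section~\ref{funcspaceprops}), it suffices to find one irreducible factor that is not cyclic for $\alpha>1/2$. As noted just before the theorem, the coordinate-aligned components of $\mathcal{Z}(f)\cap\TT^2$ come exactly from the one-variable factors of $f$, and those are cyclic for all $\alpha\le 1$; hence the curve supplied by the hypothesis lies in $\mathcal{Z}(p)$ for an irreducible factor $p$ of $f$ which depends on both variables and has $\mathcal{Z}(p)\cap\TT^2$ infinite. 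Replacing $f$ by $p$, we may assume that $f$ is irreducible, depends on both variables, and has $\mathcal{Z}(f)\cap\TT^2$ infinite. By the semi-algebraic structure of $\mathcal{Z}(f)\cap\TT^2$ recalled in Section~\ref{polyremarks}, this set is a finite union of points and real-analytic arcs; fix one arc $S$. If some sub-segment of $S$ lay on a line $z_1=\zeta$ or $z_2=\zeta$, then $z_1-\zeta$ or $z_2-\zeta$ would divide $f$, which is excluded; in particular the tangent of $S$ is not vertical at every point, so near a suitable smooth point we may write $S$ as a graph $\theta_2=\psi(\theta_1)$ with $\psi$ real-analytic.

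Now I would split on whether $\psi$ is affine. If it is not, then $\psi''\not\equiv 0$, so by analyticity there is a parameter value $\theta_1^0$ with $\psi''(\theta_1^0)\neq 0$. Writing the parametrization as $\phi(t)=(t,\psi(t))$, one has $\phi'=(1,\psi')$ and $\phi''=(0,\psi'')$; since every direction $\eta$ orthogonal to $\phi'(\theta_1^0)$ is a nonzero multiple of $(-\psi'(\theta_1^0),1)$, we get that $\phi''(\theta_1^0)\cdot\eta$ is a nonzero multiple of $\psi''(\theta_1^0)$, hence nonzero. Thus a sub-arc of $S$ is of type $2$, and Theorem~\ref{t-result} shows $f$ is not cyclic in $\mathfrak{D}_\alpha$ for $\alpha>1/2$.

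If instead $\psi(\theta_1)=c\,\theta_1+d$, then $c\neq 0$ (otherwise $S$ would lie on the line $z_2=e^{id}$, which is excluded), and the arc $\phi(t)=(t,ct+d)$ has $\phi''\equiv 0$, hence infinite type; so here I would precompose with a M\"obius automorphism $m_{a,0}$ of the bidisk, $0\neq a\in\DD$. Since $m_{a,0}$ maps $\TT^2$ onto itself, the image of $S$ lies in $\mathcal{Z}(f\circ m_{a,0})\cap\TT^2$ and, setting $\gamma(t)=\arg m_a(e^{it})$, is parametrized by $t\mapsto(\gamma(t),ct+d)$. Because $a\neq 0$, the real-analytic function $\gamma$ is not affine (its derivative is a non-constant Poisson kernel), so $\gamma''\not\equiv 0$, and we may choose $t_0$ in the positive-length parameter interval of $S$ with $\gamma''(t_0)\neq 0$. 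For the transformed arc, $\phi'=(\gamma',c)$ and $\phi''=(\gamma'',0)$, and for $\eta\perp\phi'(t_0)$ the quantity $\phi''(t_0)\cdot\eta$ is a nonzero multiple of $c\,\gamma''(t_0)\neq 0$; so this arc is of type $2$ near $t_0$. By Theorem~\ref{t-result}, $f\circ m_{a,0}$ is not cyclic for $\alpha>1/2$, and since cyclicity is M\"obius-invariant (Section~\ref{funcspaceprops}), neither is $f$. Alternatively, in this affine case one can run the integration-current argument from the Example above: the associated measure has Fourier coefficients supported on a rank-one subgroup of $\ZZ^2$, which makes its $\alpha$-energy finite for $\alpha>1/2$, so that $\alphacap(S)>0$.

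The step I expect to demand real care is the reduction paragraph: turning the qualitative hypothesis ``meets $\TT^2$ along a curve'' into a genuine real-analytic arc with honest control of $\psi,\psi',\psi''$ (this uses the semi-algebraic structure of $\mathcal{Z}(f)\cap\TT^2$ and the analytic implicit function theorem), and then checking that ``$\psi$ affine or $\psi''\not\equiv 0$'' is exhaustive---which is exactly where real-analyticity, rather than mere $C^\infty$ smoothness, is used. The two type-$2$ computations and the fact that $\gamma$ is non-affine for $a\neq 0$ are short and routine.
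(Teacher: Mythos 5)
Your proposal is correct and follows essentially the same route as the paper: produce a type-$2$ sub-arc of $\mathcal{Z}(f)\cap\TT^2$ (composing with a M\"obius automorphism $m_{a,0}$ when the arc is a non-coordinate line), apply Theorem~\ref{t-result}, and transfer non-cyclicity back via M\"obius invariance of the norm together with polynomial approximation of $p_n\circ m_{a,0}$ in multiplier norm. The only differences are cosmetic: you make the reduction to an irreducible two-variable factor explicit and locate a point with $\gamma''(t_0)\neq 0$ via real-analyticity, whereas the paper arranges $\gamma''(0)\neq 0$ at the chosen point by taking $\operatorname{Im}(a)\neq 0$.
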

\begin{proof}
  Suppose $f$ is not a polynomial in one of the variables only. If
  $\mathcal{Z}(f)\cap\mathbb{T}^2$ contains a subset of type $2$, then
  $f$ is not cyclic by Theorem \ref{t-result}. If $\ZC(f)\cap \TT^2$
  contains no subset of type $2$, then we can find a M\"obius
  transformation $m_{a,b}$ that maps a curve contained in
  $\mathcal{Z}(f)\cap\mathbb{T}^2$ to a curve of type $2$.  To see
  this we note that since the zero set of $f$ cannot be aligned along
  the coordinate axes in $\TT^2$, $\ZC(f)\cap \TT^2$ must contain a
  curve with a tangent vector which is neither horizontal nor
  vertical.  Then, it is possible to parametrize a piece of
  $\ZC(f)\cap \TT^2$ via $\phi(t) = (t,\psi(t))$ where $\psi'(0)\ne
  0$.  In Section \ref{ss-oscillatory} we explained how to
  apply a M\"obius transformation to force the curve to be type 2 at a
  point (and hence in a neighborhood of the point on the curve).

By Theorem \ref{t-result} again, the function $F=f\circ m_{a,b}^{-1}$ is not cyclic for $\alpha>1/2$.
But this implies the non-cyclicity of $f$. For if there existed a sequence of polynomials $(p_n)_{n=1}^{\infty}$ such that
\[\|p_nf-1\|_{\alpha} \to 0 \quad n\to \infty, \]
then by precomposing with the M\"obius map $m_{a,b}^{-1}$, we would obtain
\[\|p_{n}\circ m^{-1}_{a,b}\cdot F-1\|_{\alpha}\leq C \|p_nf-1\|_{\alpha}\to 0,\]
contradicting the non-cyclicity of $F$ (as can be seen by approximating the sequence $p_n\circ m_{a,b}^{-1}$ in multiplier norm by polynomials).

It follows that $f$ is not cyclic, as claimed.
\end{proof}
To complete the capacity picture, we point out that the cyclicity result in Section \ref{smallalpha} shows that the $\alpha$-capacities of
$\mathcal{Z}(f)\cap \TT^2$ for any polynomial $f$ that does not 
vanish in $\mathbb{D}^2$ are equal to $0$ for all $\alpha\leq 1/2$.

\end{document}